\documentclass[11pt,reqno]{amsart}

\usepackage[letterpaper,margin=1.05in]{geometry}
\usepackage{amsmath,amssymb,amsthm,mathtools}
\usepackage{mathrsfs,bm}
\usepackage{enumitem}
\usepackage{microtype}
\usepackage[colorlinks=true, linkcolor=blue,urlcolor=blue]{hyperref}
\hypersetup{
  hidelinks,
  pdftitle={Deformed Convolution, Cumulant Transforms, and Semigroup Generators in Hurwitz Series Rings},
  pdfauthor={Morteza Ahmadi},
  pdfsubject={Hurwitz series rings, deformed convolution, cumulants, and semigroup generators},
  pdfkeywords={Hurwitz series ring, deformed convolution, finite free cumulant, infinitesimal generator, Levy process}
}
\allowdisplaybreaks

\newtheorem{theorem}{Theorem}[section]
\newtheorem{proposition}[theorem]{Proposition}
\newtheorem{lemma}[theorem]{Lemma}
\newtheorem{corollary}[theorem]{Corollary}
\theoremstyle{definition}
\newtheorem{definition}[theorem]{Definition}
\newtheorem{example}[theorem]{Example}
\theoremstyle{remark}

\newcommand{\N}{\mathbb N}
\newcommand{\Z}{\mathbb Z}
\newcommand{\Q}{\mathbb Q}
\newcommand{\R}{\mathbb R}
\newcommand{\C}{\mathbb C}
\newcommand{\fall}[2]{(#1)_{\underline{#2}}}
\newcommand{\rise}[2]{(#1)_{\overline{#2}}}
\newcommand{\hconv}[1]{\mathbin{\boxplus^{H}_{#1}}}
\newcommand{\hconvskew}[2]{\mathbin{\boxplus^{H,#2}_{#1}}}
\newcommand{\G}{\mathscr G}
\newcommand{\Et}{\mathcal E}
\newcommand{\Jd}{\mathcal J_d}
\newcommand{\Ut}{\mathcal U}
\newcommand{\ArH}{\mathcal A_{r,H}^{(t)}}
\newcommand{\Ar}{\mathcal A_r}
\newcommand{\supp}{\operatorname{supp}}

\newcommand{\Law}{\operatorname{Law}}
\newcommand{\E}{\mathbb E}
\newcommand{\id}{\operatorname{id}}
\newcommand{\1}{\mathbf 1}
\newcommand{\coeff}[2]{[z^{#1}]\,#2}

\title[Deformed convolution in Hurwitz series rings]
{Deformed Convolution, Cumulant Transforms, and Semigroup Generators in Hurwitz Series Rings}

\author{Morteza Ahmadi}
\address{Department of Pure Mathematics, Faculty of Mathematical Sciences\\
	Tarbiat Modares University, P.O.Box:14115-134, Tehran, Iran}
\email{morteza.ahmadi23@gmail.com\\ mortezy.ahmadi@modares.ac.ir}

\date{}

\subjclass[2020]{Primary 13F25, 46L54; Secondary 33C20, 47D03, 60F05}
\keywords{Hurwitz series ring, deformed convolution, finite free cumulant, convolution semigroup, infinitesimal generator, L\'evy process}

\begin{document}

\begin{abstract}
Let $R$ be a commutative $\Q$-algebra and let $HR$ denote its Hurwitz series ring.  We introduce a one-parameter convolution $\hconv{t}$ on $HR$ for which the specialization $t=-1$ is the intrinsic Hurwitz product.  Positive integral parameters act on finite-support truncations and reproduce finite free convolution in coefficient coordinates.  A logarithmic transform yields additive and homogeneous cumulants, an explicit inversion formula, binomial, Hermite, Laguerre, and hypergeometric families, and coefficientwise analogues of the law of large numbers and the central limit theorem.  At $t=-1$, addition of independent random variables becomes multiplication of their moment sequences in $HR$; this gives applications to classical cumulants, beta--gamma products, and self-decomposable laws.  Weighted coefficient norms turn $\hconv{t}$ into a commutative Banach algebra product.  Norm-continuous convolution semigroups then have generators conjugate to multiplication operators.  Explicit formulas are obtained for the Hermite and Laguerre semigroups, the finite free heat generator, and the L\'evy--Khintchine generator.
\end{abstract}

\maketitle

\section{Introduction}

Hurwitz series provide a coefficient algebra in which the binomial factors occurring in repeated differentiation are built directly into multiplication.  Their categorical origin and their role in differential algebra were developed by Keigher and by Keigher--Pritchard \cite{Keigher1975,Keigher1997,KeigherPritchard2000}.  The skew extension $(HR,\alpha)$ has subsequently been studied from several ring-theoretic viewpoints, including Armendariz conditions, radicals, nilpotent elements, singular ideals, and McCoy-type properties \cite{Ahmadi2019,AhmadiMoussaviNourozi2014,AhmadiMoussaviNourozi2015, NouroziMoussavi2015,NouroziMoussaviAhmadi2017,NouroziRahmatiAhmadi2021}.  We use the notation for $(HR,\alpha)$, its finite-support subring $(hR,\alpha)$, and the coefficient elements $h_n$ and $h'_r$ adopted in \cite{Ahmadi2019}.

Finite free convolution arose in the study of expected characteristic polynomials and finite-dimensional random matrix models \cite{Marcus2021,MarcusSpielmanSrivastava2022}.  Its cumulants were introduced and characterized by Arizmendi and Perales \cite{ArizmendiPerales2018}; they converge to free cumulants under natural asymptotic hypotheses.  The parameter-dependent coefficient law considered in \cite{TsujieUeda2026} suggests a natural operation on Hurwitz coefficients.  In this setting, the parameter value $t=-1$ has a particularly direct algebraic interpretation: it is exactly multiplication in the Hurwitz series ring.

The main contributions are as follows.  First, we formulate the deformation inside $(HR,\alpha)$ notation and show that its skew version is associative whenever the scalar weights are fixed by $\alpha$; at $t=-1$ it becomes the skew Hurwitz product.  Second, we construct an additive cumulant transform and give a coefficient inversion formula, which supplies a rigorous passage from cumulant convergence to coordinatewise convergence.  Third, we identify several canonical one-parameter families and record their generalized-hypergeometric representation, with the $\bm b$-parameters in the numerator and the $\bm a$-parameters in the denominator.  Finally, we place the convolution on a weighted Wiener algebra.  This permits a norm-level generator theorem and yields explicit finite free and L\'evy--Khintchine evolutions.

The paper is organized as follows.  Section~\ref{sec:ring} fixes the Hurwitz notation, defines the deformed products, and records the finite free truncation.  Section~\ref{sec:cumulants} develops cumulants, examples, hypergeometric families, and limit theorems.  Section~\ref{sec:probability} treats the specialization $t=-1$ in probability.  Section~\ref{sec:generators} studies Banach algebras and infinitesimal generators.

\section{Hurwitz series and deformed products}\label{sec:ring}

\subsection{The skew Hurwitz notation}

Throughout, $\N=\{0,1,2,\ldots\}$.  We begin with the general skew construction before specializing to commutative coefficients.

\begin{definition}\label{def:skew-Hurwitz}
Let $R$ be an associative ring with identity and let $\alpha:R\to R$ be a unital endomorphism.  The skew Hurwitz series ring $(HR,\alpha)$ consists of all maps $f:\N\to R$.  Addition is componentwise, and multiplication is defined by
\begin{equation}\label{eq:skew-product}
 (fg)(n)=\sum_{k=0}^{n}\binom{n}{k}f(k)\alpha^k\bigl(g(n-k)\bigr),
 \qquad n\in\N.
\end{equation}
For $0\ne f\in(HR,\alpha)$, put
\[
 \supp(f)=\{n\in\N:f(n)\ne0\},\qquad
 \Pi(f)=\min\supp(f),
\]
and write $\Delta(f)=\max\supp(f)$ when the maximum exists.

For $n\ge1$, let $h_n:\N\to R$ be determined by $h_n(n-1)=1$ and $h_n(m)=0$ for $m\ne n-1$.  For $r\in R$, let $h'_r(0)=r$ and $h'_r(m)=0$ for $m\ne0$.  The identity of $(HR,\alpha)$ is $h_1$.  The skew Hurwitz polynomial ring is
\[
 (hR,\alpha)=\{f\in(HR,\alpha):\Delta(f)<\infty\}.
\]
\end{definition}

Every $f\in(HR,\alpha)$ has the coordinate expansion
\begin{equation}\label{eq:coordinate-expansion}
 f=\sum_{n\ge0}h'_{f(n)}h_{n+1},
\end{equation}
where the sum is interpreted coefficientwise; it is finite for $f\in(hR,\alpha)$.  Formula \eqref{eq:skew-product} gives the useful multiplication rule
\begin{equation}\label{eq:basis-product}
 (h'_a h_{i+1})(h'_b h_{j+1})
 =\binom{i+j}{i}h'_{a\alpha^i(b)}h_{i+j+1}
 \qquad(a,b\in R;\ i,j\in\N).
\end{equation}
These conventions agree with \cite{Ahmadi2019}; related structural uses of the same notation appear in \cite{AhmadiMoussaviNourozi2014,AhmadiMoussaviNourozi2015}.

For the remainder of Sections~\ref{sec:ring}--\ref{sec:cumulants}, unless a skew endomorphism is explicitly displayed, $R$ is a commutative $\Q$-algebra, $\alpha=\id_R$, and we abbreviate $(HR,\id_R)$ and $(hR,\id_R)$ by $HR$ and $hR$.

\subsection{The parameter-dependent Hurwitz convolution}

For $a\in R$ and $n\in\N$, set
\[
 \fall{a}{0}=1,\qquad
 \fall{a}{n}=a(a-1)\cdots(a-n+1)\quad(n\ge1).
\]
Define the admissible parameter set
\[
 \Omega_R=\{t\in R:\fall{t}{n}\in R^\times\text{ for every }n\ge1\}.
\]
If $R=\R$ or $\C$, then $\Omega_R=R\setminus\Z_{\ge0}$.

\begin{definition}\label{def:t-product}
For $t\in\Omega_R$ and $f,g\in HR$, define
\begin{equation}\label{eq:t-product}
 (f\hconv{t}g)(n)
 =\sum_{i+j=n}\frac{\fall{t}{n}}{\fall{t}{i}\fall{t}{j}}f(i)g(j),
 \qquad n\in\N.
\end{equation}
\end{definition}

Let $R[[z]]$ denote the coefficient algebra of series in an indeterminate $z$, with the Cauchy product.  Introduce
\begin{equation}\label{eq:Phi}
 \Phi_t^H:HR\longrightarrow R[[z]],\qquad
 \Phi_t^H(f)(z)=\sum_{n\ge0}\frac{f(n)}{\fall{t}{n}}z^n.
\end{equation}

\begin{proposition}\label{prop:algebra-isomorphism}
For every $t\in\Omega_R$, the operation $\hconv{t}$ is bilinear, commutative, and associative, with identity $h_1$.  Moreover, $\Phi_t^H$ is an algebra isomorphism and
\begin{equation}\label{eq:Phi-multiplicative}
 \Phi_t^H(f\hconv{t}g)=\Phi_t^H(f)\Phi_t^H(g).
\end{equation}
At $t=-1$, the operation $\hconv{-1}$ is the ordinary Hurwitz product on $HR$.
\end{proposition}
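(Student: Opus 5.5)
The plan is to prove multiplicativity \eqref{eq:Phi-multiplicative} first and then pull every algebraic property back along $\Phi_t^H$. I will not verify associativity or commutativity directly from \eqref{eq:t-product}.

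\emph{Step 1: $\Phi_t^H$ is an $R$-linear bijection.} Since $t\in\Omega_R$, each $\fall{t}{n}$ is a unit, and $\fall{t}{0}=1$ by convention. So $\Phi_t^H$ is $R$-linear. It is bijective, with inverse $\sum_n c_nz^n\mapsto (n\mapsto \fall{t}{n}c_n)$.

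\emph{Step 2: multiplicativity.} Compare the coefficients of $z^n$ on both sides of \eqref{eq:Phi-multiplicative}. On the left the coefficient is
\[
\frac{1}{\fall{t}{n}}\sum_{i+j=n}\frac{\fall{t}{n}}{\fall{t}{i}\fall{t}{j}}f(i)g(j)=\sum_{i+j=n}\frac{f(i)}{\fall{t}{i}}\frac{g(j)}{\fall{t}{j}}.
\]
This is exactly the coefficient of $z^n$ in the Cauchy product $\Phi_t^H(f)\Phi_t^H(g)$.

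\emph{Step 3: transport of structure.} Write $f\hconv{t}g=(\Phi_t^H)^{-1}\bigl(\Phi_t^H(f)\Phi_t^H(g)\bigr)$. Then bilinearity, commutativity and associativity of $\hconv{t}$ follow at once from the corresponding properties of the Cauchy product on the commutative ring $R[[z]]$, together with the linearity of $\Phi_t^H$ and its inverse. For the identity, note that $\Phi_t^H(h_1)=h_1(0)/\fall{t}{0}=1$. Since $(\Phi_t^H)^{-1}(1)=h_1$, this gives $h_1\hconv{t}f=f$. These facts also show that $\Phi_t^H$ is a unital algebra isomorphism.

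\emph{Step 4: the case $t=-1$.} First check that $-1\in\Omega_R$. We have $\fall{-1}{n}=(-1)(-2)\cdots(-n)=(-1)^n n!$, and this is a unit because $R$ is a $\Q$-algebra. Substituting into \eqref{eq:t-product}, the weight becomes
\[
\frac{(-1)^n n!}{(-1)^i i!\,(-1)^j j!}=\binom{n}{i},
\]
since $i+j=n$ makes the signs cancel. This recovers \eqref{eq:skew-product} with $\alpha=\id_R$. I expect no step to be a real obstacle. The only points needing care are that $\Omega_R$ guarantees invertibility of the weights, and the sign bookkeeping at $t=-1$.
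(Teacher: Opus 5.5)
Your proposal is correct and follows the paper's proof: coefficient comparison gives multiplicativity, the ring axioms and the identity $h_1$ are transported through the coefficientwise bijection $\Phi_t^H$, and at $t=-1$ the signs cancel so that the weight becomes $\binom{n}{i}$. You also check explicitly that $-1\in\Omega_R$ (since $\fall{-1}{n}=(-1)^n n!$ is a unit in a $\Q$-algebra), a small point the paper leaves implicit.
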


\begin{proof}
The coefficient of $z^n$ in the product on the right of \eqref{eq:Phi-multiplicative} is
\[
 \sum_{i+j=n}\frac{f(i)}{\fall{t}{i}}\frac{g(j)}{\fall{t}{j}}
 =\frac{(f\hconv{t}g)(n)}{\fall{t}{n}}.
\]
Thus \eqref{eq:Phi-multiplicative} holds.  Since $\Phi_t^H$ is a coefficientwise bijection, the algebraic properties follow from the Cauchy product.  Finally,
\[
 \frac{\fall{-1}{n}}{\fall{-1}{i}\fall{-1}{j}}
 =\frac{(-1)^n n!}{(-1)^i i!\,(-1)^j j!}
 =\binom{n}{i}
 \quad(i+j=n),
\]
which is the coefficient in the Hurwitz multiplication law.
\end{proof}

The same weights can be combined with an endomorphism.

\begin{proposition}
	\label{prop:skew-deformation}
Let $R$ be an algebra over a commutative ring $S$, let $\alpha:R\to R$ be an $S$-linear endomorphism, and choose $t\in S$ so that all $\fall{t}{n}$ are units in $S$.  For $f,g\in(HR,\alpha)$, define
\begin{equation}\label{eq:skew-t-product}
 (f\hconvskew{t}{\alpha}g)(n)
 =\sum_{i+j=n}\frac{\fall{t}{n}}{\fall{t}{i}\fall{t}{j}}
 f(i)\alpha^i(g(j)).
\end{equation}
Then $\hconvskew{t}{\alpha}$ is associative and has identity $h_1$.  Its specialization at $t=-1$ is the multiplication of $(HR,\alpha)$ in Definition~\ref{def:skew-Hurwitz}.
\end{proposition}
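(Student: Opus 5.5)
We will verify associativity directly from \eqref{eq:skew-t-product}. The idea is to reduce everything to a single triple-sum formula whose scalar weight is manifestly symmetric. Write $w(i,j)=\fall{t}{i+j}/(\fall{t}{i}\fall{t}{j})\in S$. The key observation is that these weights telescope:
\[
 w(i,j)\,w(i+j,k)=\frac{\fall{t}{i+j+k}}{\fall{t}{i}\fall{t}{j}\fall{t}{k}}=w(i,j+k)\,w(j,k),
\]
because the intermediate factor $\fall{t}{i+j}$, respectively $\fall{t}{j+k}$, cancels. Denote this common value by $w(i,j,k)$.

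Next we expand both bracketings. For $f,g,u\in(HR,\alpha)$ and $m=i+j+k$, the left bracketing gives
\[
 ((f\hconvskew{t}{\alpha}g)\hconvskew{t}{\alpha}u)(m)=\sum_{p+k=m}w(p,k)\sum_{i+j=p}w(i,j)f(i)\alpha^i(g(j))\,\alpha^{i+j}(u(k)).
\]
Here we use that $\alpha^p=\alpha^{i+j}$ and that the scalar $w(i,j)\in S$ passes through the product. The right bracketing gives
\[
 \sum_{i+q=m}w(i,q)f(i)\,\alpha^i\Bigl(\sum_{j+k=q}w(j,k)g(j)\alpha^j(u(k))\Bigr).
\]
To simplify it we need $\alpha^i$ to be multiplicative and $S$-linear. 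The statement says only ``$S$-linear endomorphism''; we read endomorphism as a ring endomorphism, as in Definition~\ref{def:skew-Hurwitz}, and $S$-linearity is exactly the hypothesis that $\alpha$ fixes the scalar weights. With this, $\alpha^i$ moves past $w(j,k)$ and distributes over the product, so the right bracketing becomes $\sum w(i,q)w(j,k)f(i)\alpha^i(g(j))\alpha^{i+j}(u(k))$. Both sides then equal
\[
 \sum_{i+j+k=m}w(i,j,k)\,f(i)\,\alpha^i(g(j))\,\alpha^{i+j}(u(k)).
\]
This is the only step needing care: the weights must commute with ring elements (since $R$ is an $S$-algebra, $S$ acts centrally) and must be fixed by $\alpha$. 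Note that no commutativity of $R$ is needed.

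For the identity we compute directly. We have $(h_1\hconvskew{t}{\alpha}g)(n)=w(0,n)\,\alpha^0(g(n))=g(n)$, since $w(0,n)=1$ because $\fall{t}{0}=1$. Likewise $(f\hconvskew{t}{\alpha}h_1)(n)=w(n,0)f(n)\alpha^n(1)=f(n)$, using that $\alpha$ is unital.

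Finally, for the specialization at $t=-1$ we reuse the computation in the proof of Proposition~\ref{prop:algebra-isomorphism}: it shows $w(i,j)=\binom{i+j}{i}$ when $t=-1$. Substituting this into \eqref{eq:skew-t-product} gives exactly \eqref{eq:skew-product}. Here $\fall{-1}{n}=(-1)^n n!$ must be a unit in $S$, which is the stated hypothesis on $t$.
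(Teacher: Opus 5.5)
Your proposal is correct and follows essentially the same route as the paper: the paper's proof is exactly the cocycle identity $c_{i,j}c_{i+j,k}=c_{j,k}c_{i,j+k}$ combined with $S$-linearity of $\alpha$, plus the binomial computation from Proposition~\ref{prop:algebra-isomorphism}. You also write out the triple-sum expansion, the identity check (including the use of $\alpha(1)=1$), and the facts that $\alpha$ must be a ring endomorphism and that $S$ must act centrally, all of which the paper leaves implicit.
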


\begin{proof}
Write $c_{i,j}=\fall{t}{i+j}/(\fall{t}{i}\fall{t}{j})$.  The cocycle identity
\[
 c_{i,j}c_{i+j,k}=c_{j,k}c_{i,j+k}
 =\frac{\fall{t}{i+j+k}}{\fall{t}{i}\fall{t}{j}\fall{t}{k}}
\]
and $S$-linearity of $\alpha$ show that the coefficients of
$(f\hconvskew{t}{\alpha}g)\hconvskew{t}{\alpha}u$ and
$f\hconvskew{t}{\alpha}(g\hconvskew{t}{\alpha}u)$ coincide.  The identity assertion is immediate.  The final statement follows from the computation in the proof of Proposition~\ref{prop:algebra-isomorphism}.
\end{proof}

\subsection{Positive integral parameters and finite free convolution}

For $d\ge1$, put
\[
 h_{\le d}R=\{f\in hR:f(n)=0\text{ for }n>d\}.
\]
If $f,g\in h_{\le d}R$, define
\begin{equation}\label{eq:d-product}
 (f\hconv{d}g)(n)
 =\sum_{i+j=n}\frac{\fall{d}{n}}{\fall{d}{i}\fall{d}{j}}f(i)g(j),
 \qquad 0\le n\le d,
\end{equation}
with all coefficients above $d$ set equal to zero.  The map
\[
 \Phi_d^{H,[d]}(f)=\sum_{n=0}^{d}\frac{f(n)}{\fall{d}{n}}z^n
\]
identifies $(h_{\le d}R,\hconv{d})$ with the quotient algebra $R[z]/(z^{d+1})$.

For $p(x)=\sum_{k=0}^{d}a_kx^{d-k}\in R[x]_d$, define
\begin{equation}\label{eq:Jd}
 \Jd(p)=\sum_{k=0}^{d}h'_{a_k}h_{k+1}\in h_{\le d}R,
 \qquad\text{equivalently,}\qquad \Jd(p)(k)=a_k.
\end{equation}
The finite free convolution is
\begin{equation}\label{eq:finite-free}
 (p\boxplus_d q)(x)
 =\sum_{k=0}^{d}\sum_{i+j=k}
 \frac{\fall{d}{k}}{\fall{d}{i}\fall{d}{j}}a_i b_jx^{d-k}.
\end{equation}
Consequently,
\begin{equation}\label{eq:J-intertwines}
 \Jd(p\boxplus_d q)=\Jd(p)\hconv{d}\Jd(q).
\end{equation}
Thus finite free convolution is the positive-integral truncation of the Hurwitz coefficient law.  For background on finite free convolution and its random-matrix interpretation, see \cite{Marcus2021,MarcusSpielmanSrivastava2022}.

\section{Cumulants and canonical Hurwitz families}\label{sec:cumulants}

\subsection{The cumulant transform}

Let
\[
 \Ut(HR)=\{f\in HR:f(0)=1\},\qquad
 \Ut_d(HR)=\Ut(HR)\cap h_{\le d}R.
\]
For coefficient calculations, write
\begin{equation}\label{eq:G-transform}
 \G f(z)=\sum_{n\ge0}f(n)z^n.
\end{equation}
For $f\in\Ut(HR)$, define its power-sum coordinates $p_n^H(f)$ by
\begin{equation}\label{eq:power-sums}
 P_H[f](z)=\sum_{n\ge1}p_n^H(f)z^n
 =-z\frac{d}{dz}\log\bigl(\G f(z)\bigr).
\end{equation}
The logarithm is the coefficientwise logarithm of a series with constant term one.

For $t\in\Omega_R$, define
\begin{equation}\label{eq:E-transform}
 (\Et_t f)(0)=1,\qquad
 (\Et_t f)(n)=\frac{t^n}{\fall{t}{n}}f(n)\quad(n\ge1).
\end{equation}

\begin{definition}\label{def:cumulants}
For $f\in\Ut(HR)$ and $t\in\Omega_R$, the $t$-deformed Hurwitz cumulant transform is
\begin{equation}\label{eq:K-transform}
 K_t^H[f](z)=\sum_{n\ge1}\kappa_n^t(f)z^n
 =-\frac{z}{t}\frac{d}{dz}\log\bigl(\G(\Et_t f)(z)\bigr).
\end{equation}
For $f\in\Ut_d(HR)$ and $d\in\Z_{\ge1}$, put
\begin{equation}\label{eq:K-d}
 K_d^H[f](z)
 =-\frac{z}{d}\frac{d}{dz}
 \log\left(1+\sum_{n=1}^{d}\frac{d^n}{\fall{d}{n}}f(n)z^n\right).
\end{equation}
\end{definition}

The next identity is both an inversion formula and the key tool for the limit theorems.

\begin{proposition}
	\label{prop:inversion}
For $t\in\Omega_R$ and $f\in\Ut(HR)$,
\begin{equation}\label{eq:inversion-series}
 \G(\Et_t f)(z)
 =\exp\left(-t\sum_{n\ge1}\frac{\kappa_n^t(f)}{n}z^n\right).
\end{equation}
In particular,
\begin{equation}\label{eq:coefficient-inversion}
 f(n)=\frac{\fall{t}{n}}{t^n}
 \coeff{n}{\exp\left(-t\sum_{j=1}^{n}\frac{\kappa_j^t(f)}{j}z^j\right)}.
\end{equation}
Hence $K_t^H$ is injective, and each coefficient $f(n)$ is a polynomial in the first $n$ cumulants.
\end{proposition}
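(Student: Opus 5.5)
Write $F(z)=\G(\Et_t f)(z)$. Since $f(0)=1$ and $(\Et_t f)(0)=1$, the series $F$ has constant term one. Its coefficientwise logarithm $L(z)=\log F(z)$ is therefore well defined in $R[[z]]$ with $L(0)=0$, using that $R$ is a $\Q$-algebra. Likewise $\exp$ of any series without constant term is well defined, and $\exp$ and $\log$ are mutually inverse bijections between series with zero constant term and series with constant term one.

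The plan is to integrate the defining relation \eqref{eq:K-transform}. Comparing the coefficients of $z^n$ in $K_t^H[f](z)=-\frac{z}{t}L'(z)$ gives
\[
 \kappa_n^t(f)=-\frac{n}{t}\,[z^n]L,\qquad n\ge1.
\]
Here $t$ is a unit, since $\fall{t}{1}=t\in R^\times$ by the definition of $\Omega_R$. Hence $[z^n]L=-t\kappa_n^t(f)/n$, where division by $n$ is legitimate in a $\Q$-algebra. Together with $L(0)=0$ this gives $L(z)=-t\sum_{n\ge1}\kappa_n^t(f)z^n/n$. Exponentiating yields \eqref{eq:inversion-series}.

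For \eqref{eq:coefficient-inversion}, take the coefficient of $z^n$ with $n\ge1$. The left side contributes $t^n f(n)/\fall{t}{n}$. On the right, the terms $\kappa_j^t z^j$ with $j>n$ cannot contribute to $[z^n]$ of the exponential, because every monomial in its expansion involving such a term has degree greater than $n$. We may therefore truncate the sum at $j=n$. Multiplying by $\fall{t}{n}/t^n$, which is legitimate since both factors are units, gives the formula. The case $n=0$ holds trivially, both sides being $1$.

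Finally, $[z^n]\exp\bigl(-t\sum_{j\le n}\kappa_j z^j/j\bigr)$ is a finite polynomial expression in $\kappa_1,\dots,\kappa_n$ with coefficients in $\Q[t]$. So each $f(n)$ is a polynomial in the first $n$ cumulants, and these cumulants determine $f$ on $\Ut(HR)$, which gives injectivity of $K_t^H$. No real obstacle is expected. The only care needed is to check that every division (by $t$, by $n$, by $\fall{t}{n}$) is permitted, which follows from $t\in\Omega_R$ and the $\Q$-algebra hypothesis.
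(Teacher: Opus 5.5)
Your proof is correct and follows the paper's route. Reading off $[z^n]\log\G(\Et_t f)=-t\kappa_n^t(f)/n$ from the defining relation is exactly the paper's ``divide by $z$ and integrate coefficientwise''; you then fix the constant term via $\G(\Et_t f)(0)=1$, exponentiate, and extract coefficients. Your checks on the invertibility of $t$, $n$ and $\fall{t}{n}$, and your truncation at $j=n$, make explicit what the paper leaves implicit.
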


\begin{proof}
Divide \eqref{eq:K-transform} by $z$ and integrate coefficientwise from $0$ to $z$.  The integration constant is zero because $\G(\Et_t f)(0)=1$.  This gives \eqref{eq:inversion-series}; coefficient extraction yields \eqref{eq:coefficient-inversion}.
\end{proof}

For $r\in R$, define the dilation $D_r f$ by $(D_r f)(n)=r^nf(n)$.

\begin{theorem}[Cumulant characterization]\label{thm:cumulants}
For $t\in\Omega_R$, the maps $\kappa_n^t:\Ut(HR)\to R$ have the following properties.
\begin{enumerate}[label=\textup{(\roman*)},leftmargin=2.2em]
\item For every $n\ge1$, $\kappa_n^t(f)$ is a polynomial in
$p_1^H(f),\ldots,p_n^H(f)$ whose $p_n^H$-term is
\begin{equation}\label{eq:leading-term}
 \frac{t^{n-1}}{\fall{t}{n}}p_n^H(f).
\end{equation}
\item $\kappa_n^t(D_r f)=r^n\kappa_n^t(f)$.
\item $\kappa_n^t(f\hconv{t}g)=\kappa_n^t(f)+\kappa_n^t(g)$.
\end{enumerate}
Among homogeneous polynomial functionals satisfying additivity, the leading term \eqref{eq:leading-term} determines $\kappa_n^t$ uniquely.  The same assertions hold for $t=d$ on $\Ut_d(HR)$.
\end{theorem}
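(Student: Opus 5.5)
The plan is to reduce everything to Proposition~\ref{prop:algebra-isomorphism}, using the identity
\[
 \G(\Et_t f)(z)=\sum_{n\ge0}\frac{t^n f(n)}{\fall{t}{n}}z^n=\Phi_t^H(f)(tz),
\]
which holds for $f\in\Ut(HR)$ because $(\Et_t f)(0)=1=f(0)$. The substitution $z\mapsto tz$ is a coefficientwise algebra endomorphism of $R[[z]]$ that preserves constant term one.

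\textbf{Parts (ii) and (iii).} By \eqref{eq:Phi-multiplicative} we get $\Phi_t^H(f\hconv{t}g)(tz)=\Phi_t^H(f)(tz)\,\Phi_t^H(g)(tz)$. The formal logarithm turns products of series with constant term one into sums, so $\log\G(\Et_t(f\hconv{t}g))=\log\G(\Et_t f)+\log\G(\Et_t g)$. Applying the linear operator $-\frac{z}{t}\frac{d}{dz}$ then gives (iii). For (ii), note that $\Phi_t^H(D_rf)(z)=\Phi_t^H(f)(rz)$. The logarithm commutes with $z\mapsto rz$, and $z\frac{d}{dz}$ multiplies the $z^n$ coefficient by $n$ before and after this substitution. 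Hence the $z^n$ coefficient of $K_t^H[D_rf]$ is $r^n\kappa_n^t(f)$.

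\textbf{Part (i).} I will compare two triangular changes of variables.
\begin{itemize}[leftmargin=2.2em]
\item From \eqref{eq:power-sums} and Newton-type recursion over $\Q$, one has $p_n^H(f)=-n f(n)+(\text{polynomial in }f(1),\dots,f(n-1))$. Inverting recursively gives $f(n)=-\tfrac1n p_n^H(f)+(\text{polynomial in }p_1^H,\dots,p_{n-1}^H)$.
\item Expanding $\log(1+u)$, the coefficient $[z^n]\log\G(\Et_tf)$ equals $\tfrac{t^n}{\fall{t}{n}}f(n)$ plus products of at least two of the $f(j)$ with $j<n$.
\end{itemize}
Multiplying by $-n/t$ gives $\kappa_n^t(f)$ as a polynomial in $p_1^H,\dots,p_n^H$. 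The $p_n^H$-term is
\[
 -\tfrac nt\cdot\tfrac{t^n}{\fall{t}{n}}\cdot\bigl(-\tfrac1n p_n^H\bigr)=\tfrac{t^{n-1}}{\fall{t}{n}}p_n^H,
\]
which is \eqref{eq:leading-term}. The same triangularity shows that $p_1^H,\dots,p_n^H$ are polynomials in $\kappa_1^t,\dots,\kappa_n^t$. This uses that the leading coefficient $t^{n-1}/\fall{t}{n}$ is a unit, which holds when $t\in R^\times$.

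\textbf{Uniqueness (the main obstacle).} This step needs a precise reading of ``homogeneous polynomial functional''. I will take it to mean a polynomial $F_n(p_1^H,\dots,p_n^H)$ that is weighted-homogeneous of degree $n$ (equivalently, $F_n(D_rf)=r^nF_n(f)$ as a polynomial identity in $r$, with $p_j$ of weight $j$). I will take additivity to mean that it holds as a polynomial identity in the coordinates.

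Let $F_n$ be such a functional with the same leading term as $\kappa_n^t$. Then $Q=F_n-\kappa_n^t$ is additive, weighted-homogeneous of degree $n$, and involves only $p_1^H,\dots,p_{n-1}^H$. By triangularity, $Q$ can be rewritten as a polynomial $\tilde Q(\kappa_1^t,\dots,\kappa_{n-1}^t)$.

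Proposition~\ref{prop:inversion} shows that every sequence of cumulants is realized: define $f(n)$ by \eqref{eq:coefficient-inversion}. So the $\kappa_j^t$ are free coordinates on $\Ut(HR)$, and by (iii) the operation $\hconv{t}$ becomes coordinatewise addition in these coordinates. Then $\tilde Q(x+y)=\tilde Q(x)+\tilde Q(y)$ as polynomials over the $\Q$-algebra $R$. Comparing multidegree components forces $\tilde Q$ to be linear without constant term, i.e. $\tilde Q=\sum_{j<n}c_j\kappa_j^t$. Weighted homogeneity of degree $n$ then kills every $c_j$ with $j<n$, so $Q=0$.

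\textbf{The truncated case $t=d$.} Here I repeat the argument inside $R[z]/(z^{d+1})$, using $\Phi_d^{H,[d]}$ in place of $\Phi_t^H$ and restricting to $n\le d$. One caveat: $t=d$ is not in $\Omega_R$, since $\fall{d}{n}=0$ for $n>d$. For $1\le n\le d$, however, $\fall{d}{n}$ is a positive integer and hence a unit in the $\Q$-algebra $R$, which is all the argument uses.
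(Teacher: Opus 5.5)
Your proof is correct. For (i)--(iii) it is essentially the paper's argument. The paper gets the triangular recurrences from $P_H[f]\,\G f=-z\frac{d}{dz}\G f$ and $tK_t^H[f]\,\G(\Et_t f)=-z\frac{d}{dz}\G(\Et_t f)$ rather than by expanding $\log(1+u)$. It also checks $\G(\Et_t(f\hconv{t}g))=\G(\Et_t f)\,\G(\Et_t g)$ directly rather than through your identity $\G(\Et_t f)(z)=\Phi_t^H(f)(tz)$. The condition $t\in R^\times$ that you flag is automatic for $t\in\Omega_R$, since $t=\fall{t}{1}$.

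The genuine difference is uniqueness. The paper appeals to the one-dimensionality of degree-$n$ primitives in the coordinate Hopf algebra and cites the cumulant literature. You prove the needed fact by hand:
\begin{itemize}
\item by the inversion formula, the $\kappa_j^t$ are free coordinates in which $\hconv{t}$ becomes coordinatewise addition;
\item an additive polynomial over a $\Q$-algebra is linear, because the mixed binomial coefficients are units;
\item weighted homogeneity then eliminates $\kappa_1^t,\dots,\kappa_{n-1}^t$.
\end{itemize}
Your route is self-contained and makes precise what ``homogeneous polynomial functional'' and ``additive'' mean. The polynomial-identity readings you adopt agree with the pointwise ones, since a polynomial over a $\Q$-algebra that vanishes at all rational points is zero. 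The paper's route is shorter but leaves these points implicit.

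Your restriction to $1\le n\le d$ when $t=d$ is a real correction, not just a caveat. For $n>d$ the coefficient $d^{n-1}/\fall{d}{n}$ is undefined, and additivity holds only modulo $z^{d+1}$. For example, with $d=1$ and $f(1)=g(1)=1$ one gets $\kappa_2^{(1)}(f)=\kappa_2^{(1)}(g)=1$ but $\kappa_2^{(1)}(f\hconv{1}g)=4$. So the paper's ``the same assertions hold for $t=d$'' must be read with $n\le d$.
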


\begin{proof}
The identities
\[
 P_H[f](z)\G f(z)=-z\frac{d}{dz}\G f(z)
\]
and
\[
 tK_t^H[f](z)\G(\Et_t f)(z)
 =-z\frac{d}{dz}\G(\Et_t f)(z)
\]
give triangular recurrences after coefficients are compared.  Induction on $n$ shows that the coefficient of $p_n^H(f)$ in $\kappa_n^t(f)$ is $t^{n-1}/\fall{t}{n}$ and that all remaining terms involve only $p_1^H(f),\ldots,p_{n-1}^H(f)$.

Since $\G(D_r f)(z)=\G f(rz)$ and $\Et_tD_r=D_r\Et_t$, equation \eqref{eq:K-transform} gives $K_t^H[D_r f](z)=K_t^H[f](rz)$, proving homogeneity.  Furthermore,
\begin{align*}
 \G\bigl(\Et_t(f\hconv{t}g)\bigr)(z)
 &=\sum_{n\ge0}\frac{t^n}{\fall{t}{n}}(f\hconv{t}g)(n)z^n\\
 &=\G(\Et_t f)(z)\G(\Et_t g)(z).
\end{align*}
Taking the logarithm in \eqref{eq:K-transform} proves additivity.

For uniqueness, the coordinate Hopf algebra of principal units under $\hconv{t}$ is obtained from the ordinary multiplicative coordinate Hopf algebra by the invertible diagonal change $f\mapsto\Et_t f$.  Over a $\Q$-algebra, its primitive homogeneous component of degree $n$ is one-dimensional; equivalently, it is generated by the $n$th power-sum primitive.  The coefficient in \eqref{eq:leading-term} therefore fixes the functional.  This is the standard cumulant uniqueness argument; see \cite{Macdonald1995,Lehner2002,ArizmendiPerales2018}.
\end{proof}

If $p\in R[x]_{\mathrm{monic},d}$, then Definition~\ref{def:cumulants} and \eqref{eq:Jd} imply
\begin{equation}\label{eq:finite-free-cumulants}
 K_d^H[\Jd(p)](z)
 \equiv\sum_{n=1}^{d}\kappa_n^{(d)}(p)z^n\pmod{z^{d+1}},
\end{equation}
where $\kappa_n^{(d)}$ are the finite free cumulants of \cite{ArizmendiPerales2018}.

\subsection{Binomial, Hermite, and Laguerre series}

The infinite-series formulas below use $t\in\R\setminus\Z_{\ge0}$, so that $t\in\Omega_{\R}$.  Their positive-integral counterparts are interpreted in $h_{\le d}R$ by retaining the coefficients with $0\le n\le d$.

\begin{example}[Binomial family]\label{ex:binomial}
For $t\in\R\setminus\Z_{\ge0}$ and $\lambda\in\R$, define
\begin{equation}\label{eq:binomial}
 \mathsf B_{\lambda}^{(t)}(n)
 =(-1)^n\frac{\fall{t}{n}}{n!}\lambda^n.
\end{equation}
Then $\G\mathsf B_{\lambda}^{(t)}(z)=(1-\lambda z)^t$ and
\[
 K_t^H[\mathsf B_{\lambda}^{(t)}](z)=\lambda z.
\]
Consequently,
\begin{equation}\label{eq:binomial-semigroup}
 \mathsf B_{\lambda_1}^{(t)}\hconv{t}\mathsf B_{\lambda_2}^{(t)}
 =\mathsf B_{\lambda_1+\lambda_2}^{(t)}.
\end{equation}
For $d\ge1$, $\mathsf B_{\lambda}^{(d)}=\Jd((x-\lambda)^d)$.
\end{example}

\begin{example}[Hermite family]\label{ex:Hermite}
For $t\in\R\setminus\Z_{\ge0}$, define
\begin{equation}\label{eq:Hermite}
 \mathsf H^{(t)}(2k)
 =(-1)^k\frac{\fall{t}{2k}}{t^k(2k)!!},
 \qquad
 \mathsf H^{(t)}(2k+1)=0.
\end{equation}
Then
\[
 K_t^H[\mathsf H^{(t)}](z)=z^2.
\]
For $s\ge0$, set $\mathsf H_s^{(t)}=D_{\sqrt{s}}\mathsf H^{(t)}$ and $\mathsf H_0^{(t)}=h_1$.  The cumulant transform gives
\begin{equation}\label{eq:Hermite-semigroup}
 \mathsf H_{s_1}^{(t)}\hconv{t}\mathsf H_{s_2}^{(t)}
 =\mathsf H_{s_1+s_2}^{(t)}.
\end{equation}
For $d\ge1$, $\mathsf H^{(d)}=\Jd(H_d)$, where
\[
 H_d(x)=\sum_{k=0}^{\lfloor d/2\rfloor}
 (-1)^k\frac{\fall{d}{2k}}{d^k(2k)!!}x^{d-2k}.
\]
\end{example}

\begin{example}[Laguerre family]\label{ex:Laguerre}
For $t\in\R\setminus\Z_{\ge0}$ and $\lambda\ge0$, define
\begin{equation}\label{eq:Laguerre}
 \mathsf L_{\lambda}^{(t)}(n)
 =(-1)^n\frac{\fall{\lambda t}{n}\fall{t}{n}}{t^n n!}.
\end{equation}
Then
\[
 K_t^H[\mathsf L_{\lambda}^{(t)}](z)
 =\frac{\lambda z}{1-z},
\]
so every cumulant equals $\lambda$ and
\begin{equation}\label{eq:Laguerre-semigroup}
 \mathsf L_{\lambda_1}^{(t)}\hconv{t}\mathsf L_{\lambda_2}^{(t)}
 =\mathsf L_{\lambda_1+\lambda_2}^{(t)}.
\end{equation}
For $d\ge1$, $\mathsf L_{\lambda}^{(d)}=\Jd(L_{d,\lambda})$, with
\[
 L_{d,\lambda}(x)=\sum_{k=0}^{d}
 (-1)^k\frac{\fall{\lambda d}{k}\fall{d}{k}}{d^k k!}x^{d-k}.
\]
\end{example}

\begin{proof}[Verification of Examples~\ref{ex:binomial}--\ref{ex:Laguerre}]
The transformed coefficient series in \eqref{eq:K-transform} are, respectively,
\[
 \G(\Et_t\mathsf B_{\lambda}^{(t)})(z)=e^{-t\lambda z},
 \qquad
 \G(\Et_t\mathsf H^{(t)})(z)=e^{-tz^2/2},
\]
and
\[
 \G(\Et_t\mathsf L_{\lambda}^{(t)})(z)=(1-z)^{\lambda t}.
\]
Applying $-(z/t)d/dz$ gives the three cumulant transforms.  The semigroup identities follow from Theorem~\ref{thm:cumulants} and Proposition~\ref{prop:inversion}.  The positive-integral identifications follow by comparing coefficients under $\Jd$.
\end{proof}

\subsection{Hypergeometric Hurwitz series}

For the infinite-series identities in this subsection, let $R=\C$ and $t\in\C\setminus\Z_{\ge0}$.  For vectors $\bm a=(a_1,\ldots,a_i)$ and $\bm b=(b_1,\ldots,b_j)$, write
\[
 \fall{\bm a}{n}=\prod_{r=1}^{i}\fall{a_r}{n},
 \qquad
 \rise{\bm a}{n}=\prod_{r=1}^{i}\rise{a_r}{n}.
\]
Assume that no factor in $\fall{t\bm a}{n}$ vanishes.  Define
\begin{equation}\label{eq:hyper-H}
 \mathfrak H_t\!\begin{bmatrix}\bm b\\\bm a\end{bmatrix}(n)
 =(-1)^n\frac{\fall{t}{n}\fall{t\bm b}{n}}
 {n!\fall{t\bm a}{n}}.
\end{equation}
With the standard generalized-hypergeometric notation,
\begin{equation}\label{eq:hyper-correct}
 \G\mathfrak H_t\!\begin{bmatrix}\bm b\\\bm a\end{bmatrix}(z)
 ={}_{j+1}F_i\!\left(
 \begin{matrix}-t,-t\bm b\\-t\bm a\end{matrix};
 (-1)^{j-i}z\right).
\end{equation}
The placement of $-t\bm b$ in the numerator and $-t\bm a$ in the denominator follows directly from \eqref{eq:hyper-H}.

For $d\ge1$, the corresponding polynomial is
\begin{equation}\label{eq:hyper-poly}
 \mathcal H_d\!\begin{bmatrix}\bm b\\\bm a\end{bmatrix}(x)
 =\sum_{k=0}^{d}(-1)^k
 \frac{\fall{d}{k}\fall{d\bm b}{k}}
 {k!\fall{d\bm a}{k}}x^{d-k},
\end{equation}
whenever the denominators are nonzero, and
\[
 \Jd\left(\mathcal H_d\!\begin{bmatrix}\bm b\\\bm a\end{bmatrix}\right)
 =\mathfrak H_d\!\begin{bmatrix}\bm b\\\bm a\end{bmatrix}.
\]
Up to coefficient dilation, these families include the binomial and Laguerre cases and connect with the finite free study of hypergeometric polynomials \cite{KoekoekLeskySwarttouw2010,MartinezMoralesPerales2024,CampbellMoralesPerales2025}.

\begin{proposition}
	\label{prop:hyper-closure}
For $\ell=1,2,3$, let $\bm a_\ell$ and $\bm b_\ell$ have lengths $i_\ell$ and $j_\ell$, assume that every denominator parameter is admissible, and put $s_\ell=(-1)^{i_\ell+j_\ell+1}$.  Then
\begin{equation}\label{eq:hyper-identity}
 {}_{j_1}F_{i_1}\!\left(\begin{matrix}-t\bm b_1\\-t\bm a_1\end{matrix};z\right)
 {}_{j_2}F_{i_2}\!\left(\begin{matrix}-t\bm b_2\\-t\bm a_2\end{matrix};z\right)
 ={}_{j_3}F_{i_3}\!\left(\begin{matrix}-t\bm b_3\\-t\bm a_3\end{matrix};z\right)
\end{equation}
if and only if
\begin{equation}\label{eq:hyper-convolution}
 D_{s_1}\mathfrak H_t\!\begin{bmatrix}\bm b_1\\\bm a_1\end{bmatrix}
 \hconv{t}
 D_{s_2}\mathfrak H_t\!\begin{bmatrix}\bm b_2\\\bm a_2\end{bmatrix}
 =D_{s_3}\mathfrak H_t\!\begin{bmatrix}\bm b_3\\\bm a_3\end{bmatrix}.
\end{equation}
\end{proposition}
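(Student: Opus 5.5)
The plan is to reduce everything to Proposition~\ref{prop:algebra-isomorphism}: since $\Phi_t^H$ is an algebra isomorphism from $(HR,\hconv{t})$ onto $R[[z]]$ with the Cauchy product, \eqref{eq:hyper-convolution} holds if and only if the $\Phi_t^H$-images of the two sides agree. So it suffices to prove, for each $\ell=1,2,3$,
\[
 \Phi_t^H\Bigl(D_{s_\ell}\mathfrak H_t\!\begin{bmatrix}\bm b_\ell\\\bm a_\ell\end{bmatrix}\Bigr)(z)
 ={}_{j_\ell}F_{i_\ell}\!\left(\begin{matrix}-t\bm b_\ell\\-t\bm a_\ell\end{matrix};z\right).
\]
Once this holds, \eqref{eq:Phi-multiplicative} turns \eqref{eq:hyper-convolution} into \eqref{eq:hyper-identity}. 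Injectivity of $\Phi_t^H$ gives the converse.

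To compute the image, I will use \eqref{eq:Phi} and \eqref{eq:hyper-H}. The factor $\fall{t}{n}$ in the numerator of $\mathfrak H_t$ cancels the $\fall{t}{n}$ in the denominator of $\Phi_t^H$; this is legitimate because $t\in\Omega_{\C}$. The coefficient of $z^n$ is therefore
\[
 s^n(-1)^n\frac{\fall{t\bm b}{n}}{n!\,\fall{t\bm a}{n}}.
\]
Next I will apply the elementary identity $\fall{x}{n}=(-1)^n\rise{-x}{n}$ to each of the $j$ numerator factors and each of the $i$ denominator factors. This rewrites the coefficient as
\[
 s^n(-1)^{n(1+j-i)}\frac{\rise{-t\bm b}{n}}{\rise{-t\bm a}{n}\,n!}.
\]

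The only real obstacle is the sign bookkeeping, which is exactly what the choice $s=(-1)^{i+j+1}$ is designed for. The total sign is $(-1)^{n(i+j+1)+n(1+j-i)}=(-1)^{n(2j+2)}=1$. Hence the coefficient is exactly the $n$th term of ${}_jF_i(-t\bm b;-t\bm a;z)$ in standard generalized-hypergeometric notation. This is consistent with \eqref{eq:hyper-correct}, which has the extra parameter $-t$ and argument $(-1)^{j-i}z$ because there $\fall{t}{n}$ was not divided out.

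I will also check that the admissibility hypothesis on the denominator parameters makes every $\rise{-t\bm a}{n}$ nonzero, so both sides are well-defined formal series. Since $s_\ell^2=1$, the dilation $D_{s_\ell}$ is an involutive coefficientwise automorphism of $HR$, so no information is lost in passing between the two formulations.
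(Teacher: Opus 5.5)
Your proposal is correct and follows the paper's own proof. The paper likewise computes $\Phi_t^H\bigl(D_{(-1)^{i+j+1}}\mathfrak H_t\bigr)(z)={}_jF_i(-t\bm b;-t\bm a;z)$ from \eqref{eq:Phi}, \eqref{eq:hyper-H} and $\fall{-c}{n}=(-1)^n\rise{c}{n}$, and then transports the identity through the algebra isomorphism of Proposition~\ref{prop:algebra-isomorphism}; your explicit sign count $(-1)^{n(i+j+1)+n(1+j-i)}=1$ supplies the detail the paper leaves implicit.
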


\begin{proof}
From \eqref{eq:Phi}, \eqref{eq:hyper-H}, and the conversion
$\fall{-c}{n}=(-1)^n\rise{c}{n}$, one obtains
\[
 \Phi_t^H\left(D_{(-1)^{i+j+1}}
 \mathfrak H_t\!\begin{bmatrix}\bm b\\\bm a\end{bmatrix}\right)(z)
 ={}_{j}F_i\!\left(\begin{matrix}-t\bm b\\-t\bm a\end{matrix};z\right).
\]
Apply the algebra isomorphism in Proposition~\ref{prop:algebra-isomorphism}.
\end{proof}

\subsection{Coefficientwise limit theorems}

Equip $HR\cong\prod_{n\ge0}R$ with the product topology.  Thus $f_m\to f$ means $f_m(n)\to f(n)$ for each fixed $n$.

\begin{lemma}\label{lem:cumulant-convergence}
Let $R=\R$ or $\C$ and $t\in\Omega_R$.  For a sequence $f_m\in\Ut(HR)$ and $f\in\Ut(HR)$, the following are equivalent:
\begin{enumerate}[label=\textup{(\roman*)},leftmargin=2.2em]
\item $f_m(n)\to f(n)$ for every $n$;
\item $\kappa_n^t(f_m)\to\kappa_n^t(f)$ for every $n$.
\end{enumerate}
\end{lemma}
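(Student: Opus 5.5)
The plan is to show that both directions reduce to continuity of fixed polynomial maps, because each is a triangular polynomial change of coordinates over $\Q$.

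\emph{(i)$\Rightarrow$(ii).} Fix $n\ge1$. The coefficients of $\G(\Et_t f)(z)=1+\sum_{m\ge1}\frac{t^m}{\fall{t}{m}}f(m)z^m$ up to order $n$ depend only on $f(1),\dots,f(n)$, with fixed scalar weights $t^m/\fall{t}{m}$; these are finite because $t\in\Omega_R$. The coefficientwise logarithm of a series with constant term one produces, in degree $m$, a polynomial with rational coefficients in its first $m$ coefficients. Applying $-\frac{z}{t}\frac{d}{dz}$ then multiplies the degree-$m$ coefficient by $-m/t$, which is legitimate since $t\ne0$; indeed $\fall{t}{1}=t$ is a unit. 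So by \eqref{eq:K-transform},
\[
 \kappa_n^t(f)=Q_n\bigl(f(1),\dots,f(n)\bigr)
\]
for a fixed polynomial $Q_n$ with coefficients in $\C$. Equivalently, one can use the triangular recurrence from $tK_t^H[f]\,\G(\Et_tf)=-z\frac{d}{dz}\G(\Et_tf)$ in the proof of Theorem~\ref{thm:cumulants}. A polynomial is continuous on $\C^n$, so $f_m(j)\to f(j)$ for $j\le n$ gives $\kappa_n^t(f_m)\to\kappa_n^t(f)$.

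\emph{(ii)$\Rightarrow$(i).} Use the coefficient inversion formula \eqref{eq:coefficient-inversion} of Proposition~\ref{prop:inversion}. For each $n$, $f(n)$ is $\fall{t}{n}/t^n$ times the $z^n$-coefficient of $\exp\bigl(-t\sum_{j\le n}\kappa_j^t(f)z^j/j\bigr)$. Expanding the exponential, this coefficient is a finite sum of monomials in $\kappa_1^t(f),\dots,\kappa_n^t(f)$ with fixed coefficients. Hence $f(n)=P_n(\kappa_1^t(f),\dots,\kappa_n^t(f))$ for a fixed polynomial $P_n$, and $f_m(n)\to f(n)$ follows by continuity. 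Since $f_m(0)=f(0)=1$ for all $m$, the case $n=0$ is trivial.

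No step is a genuine obstacle. The one point needing care is that the polynomials $Q_n$ and $P_n$ do not depend on $f$; they depend only on $t$ and $n$. This holds because the logarithm and the exponential are taken coefficientwise on formal series with constant term $1$, so no analytic convergence of $\G f$ is required. Coordinatewise convergence in the product topology then transfers directly through these finitely many fixed polynomials.
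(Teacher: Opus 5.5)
Your proof is correct and follows the paper's argument: (i)$\Rightarrow$(ii) holds because each $\kappa_n^t$ is a fixed polynomial in $f(1),\dots,f(n)$ by Definition~\ref{def:cumulants}, and (ii)$\Rightarrow$(i) follows from the coefficient inversion formula \eqref{eq:coefficient-inversion}. Your added remark that these polynomials depend only on $t$ and $n$ makes explicit a point the paper's brief proof leaves implicit.
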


\begin{proof}
Each cumulant is a polynomial in the first finitely many coefficients by Definition~\ref{def:cumulants}, so (i) implies (ii).  Conversely, formula \eqref{eq:coefficient-inversion} expresses $f_m(n)$ as a fixed polynomial in $\kappa_1^t(f_m),\ldots,\kappa_n^t(f_m)$, proving (ii)$\Rightarrow$(i).
\end{proof}

For $m\ge1$, write $f^{\hconv{t}m}$ for the $m$-fold convolution power.

\begin{theorem}[Law of large numbers and central limit theorem]\label{thm:LLN-CLT}
Let $t\in\R\setminus\Z_{\ge0}$ and $f\in\Ut(H\R)$.
\begin{enumerate}[label=\textup{(\roman*)},leftmargin=2.2em]
\item If $\kappa_1^t(f)=\lambda$, then
\[
 D_{1/m}\bigl(f^{\hconv{t}m}\bigr)
 \longrightarrow \mathsf B_{\lambda}^{(t)}.
\]
\item If $\kappa_1^t(f)=0$ and $\kappa_2^t(f)=1$, then
\[
 D_{1/\sqrt m}\bigl(f^{\hconv{t}m}\bigr)
 \longrightarrow \mathsf H^{(t)}.
\]
\end{enumerate}
Both limits are coefficientwise as $m\to\infty$.
\end{theorem}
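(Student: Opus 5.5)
The plan is to pass entirely to cumulant coordinates and then invoke Lemma~\ref{lem:cumulant-convergence}, which turns the coefficientwise statement into convergence of each cumulant. By additivity in Theorem~\ref{thm:cumulants}(iii) and induction on $m$, we get $\kappa_n^t(f^{\hconv{t}m})=m\,\kappa_n^t(f)$ for every $n\ge1$. Since $f^{\hconv{t}m}$ still has constant coefficient $1$, it lies in $\Ut(H\R)$. Homogeneity, Theorem~\ref{thm:cumulants}(ii), then gives
\[
 \kappa_n^t\bigl(D_r f^{\hconv{t}m}\bigr)=r^n m\,\kappa_n^t(f)
\]
for every real $r$. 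Here $D_r$ also preserves $\Ut(H\R)$, so all objects stay in the domain where the lemma applies.

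For (i), take $r=1/m$. The $n$th cumulant becomes $m^{1-n}\kappa_n^t(f)$. For $n=1$ this equals $\lambda$, and for $n\ge2$ it tends to $0$. By Example~\ref{ex:binomial}, $K_t^H[\mathsf B_\lambda^{(t)}](z)=\lambda z$, so the cumulants of the limit are $\kappa_1^t=\lambda$ and $\kappa_n^t=0$ for $n\ge2$. Hence the cumulants converge, and Lemma~\ref{lem:cumulant-convergence} ((ii)$\Rightarrow$(i)) gives the coefficientwise limit. The lemma is stated for a fixed target $f$; here the target is $\mathsf B_\lambda^{(t)}\in\Ut(H\R)$, so the lemma applies directly.

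For (ii), take $r=m^{-1/2}$. The cumulants become $m^{1-n/2}\kappa_n^t(f)$. For $n=1$ this is $m^{1/2}\cdot 0=0$, for $n=2$ it equals $1$, and for $n\ge3$ it tends to $0$. Example~\ref{ex:Hermite} gives $K_t^H[\mathsf H^{(t)}](z)=z^2$, so these limits are exactly the cumulants of $\mathsf H^{(t)}$. The same lemma then finishes the proof.

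The only point needing care is the convergence step inside the lemma, that is, that (ii)$\Rightarrow$(i) holds uniformly enough. This is immediate from the inversion formula \eqref{eq:coefficient-inversion}, because $f(n)$ is a fixed polynomial (depending only on $n$ and $t$) in $\kappa_1^t,\dots,\kappa_n^t$, and polynomials are continuous. I do not expect a real obstacle beyond checking that the scaling exponents are computed correctly, in particular that $\kappa_1^t(f)=0$ is needed in (ii) to kill the divergent factor $m^{1/2}$.
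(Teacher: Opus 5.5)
Your proposal is correct and matches the paper's proof. Additivity and homogeneity give $\kappa_n^t\bigl(D_{1/m}(f^{\hconv{t}m})\bigr)=m^{1-n}\kappa_n^t(f)$, and $m^{1-n/2}\kappa_n^t(f)$ for the $\sqrt m$ scaling; these converge to the cumulant sequences of $\mathsf B_{\lambda}^{(t)}$ and $\mathsf H^{(t)}$ from Examples~\ref{ex:binomial} and~\ref{ex:Hermite}, and Lemma~\ref{lem:cumulant-convergence} turns this into coefficientwise convergence. Your extra checks, that convolution powers and dilations stay in $\Ut(H\R)$ and that $\kappa_1^t(f)=0$ is what removes the factor $m^{1/2}$, are details the paper leaves implicit.
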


\begin{proof}
Additivity and homogeneity give
\[
 \kappa_n^t\left(D_{1/m}(f^{\hconv{t}m})\right)
 =m^{1-n}\kappa_n^t(f).
\]
The right side tends to $\lambda$ for $n=1$ and to zero for $n\ge2$, matching the cumulants of $\mathsf B_{\lambda}^{(t)}$.  Lemma~\ref{lem:cumulant-convergence} proves (i).  Similarly,
\[
 \kappa_n^t\left(D_{1/\sqrt m}(f^{\hconv{t}m})\right)
 =m^{1-n/2}\kappa_n^t(f),
\]
which converges to the cumulant sequence with only the second component equal to one.  This is the cumulant sequence of $\mathsf H^{(t)}$, proving (ii).
\end{proof}

\section{\texorpdfstring{The specialization $t=-1$ in probability}{The specialization t=-1 in probability}}\label{sec:probability}

In this section, coefficients are real.  If a random variable $X$ has moments of every order, define its Hurwitz moment series by
\begin{equation}\label{eq:moment-series}
 \mu_X(n)=\E[X^n],\qquad n\in\N.
\end{equation}

\subsection{Independent sums and classical cumulants}

Let
\[
 \Psi_X(z)=\sum_{n\ge0}\frac{\E[X^n]}{n!}z^n
\]
be the exponential moment series.  Define the classical cumulants $c_n(X)$ by
\begin{equation}\label{eq:classical-cumulants}
 \log\Psi_X(z)=\sum_{n\ge1}\frac{c_n(X)}{n!}z^n.
\end{equation}

\begin{theorem}\label{thm:classical}
Let $X$ and $Y$ have moments of every order.
\begin{enumerate}[label=\textup{(\roman*)},leftmargin=2.2em]
\item If $X$ and $Y$ are independent, then
\begin{equation}\label{eq:independent-Hurwitz}
 \mu_{X+Y}=\mu_X\hconv{-1}\mu_Y=\mu_X\mu_Y
 \quad\text{in }H\R.
\end{equation}
\item The Hurwitz cumulants and the classical cumulants satisfy
\begin{equation}\label{eq:cumulant-relation}
 K_{-1}^H[\mu_X](z)
 =z\frac{d}{dz}\log\Psi_X(z)
 =\sum_{n\ge1}\frac{c_n(X)}{(n-1)!}z^n.
\end{equation}
In particular,
\begin{equation}\label{eq:cumulant-scaling}
 \kappa_n^{-1}(\mu_X)=\frac{c_n(X)}{(n-1)!}.
\end{equation}
\end{enumerate}
\end{theorem}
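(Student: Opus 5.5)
The plan is a direct computation for each part. Both reduce to the specialization $t=-1$ already isolated in Proposition~\ref{prop:algebra-isomorphism} together with Definition~\ref{def:cumulants}. First, $-1\in\Omega_{\R}$, since
\[
 \fall{-1}{n}=(-1)(-2)\cdots(-n)=(-1)^n n!\neq0 .
\]
So $\hconv{-1}$, $\Et_{-1}$ and $K_{-1}^H$ are all defined on $H\R$. Also $\mu_X(0)=\E[1]=1$, so $\mu_X\in\Ut(H\R)$ and its cumulant transform makes sense.

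For (i), expand $(X+Y)^n$ by the binomial theorem and take expectations. Each term $X^kY^{n-k}$ is integrable because $X$ and $Y$ are independent and each has moments of all orders, so $\E|X^kY^{n-k}|=\E|X|^k\,\E|Y|^{n-k}<\infty$. Independence then gives
\[
 \E[(X+Y)^n]=\sum_{k=0}^n\binom nk\E[X^k]\,\E[Y^{n-k}] .
\]
This is exactly the Hurwitz product $(\mu_X\mu_Y)(n)$ from \eqref{eq:skew-product} with $\alpha=\id$. By the last assertion of Proposition~\ref{prop:algebra-isomorphism}, the weights $\fall{-1}{n}/(\fall{-1}{i}\fall{-1}{j})$ equal $\binom ni$, so this is also $(\mu_X\hconv{-1}\mu_Y)(n)$. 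Comparing coefficients for every $n$ proves \eqref{eq:independent-Hurwitz}.

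For (ii), compute the transform \eqref{eq:E-transform} at $t=-1$:
\[
 (\Et_{-1}\mu_X)(n)=\frac{(-1)^n}{(-1)^n n!}\,\E[X^n]=\frac{\E[X^n]}{n!},
\]
which also holds at $n=0$. Hence $\G(\Et_{-1}\mu_X)=\Psi_X$ as formal power series. Substituting into \eqref{eq:K-transform} with $t=-1$, the prefactor $-z/t$ becomes $z$, which gives
\[
 K_{-1}^H[\mu_X](z)=z\frac{d}{dz}\log\Psi_X(z).
\]
Next apply $z\,d/dz$ termwise to the defining series \eqref{eq:classical-cumulants}. Since $z\frac{d}{dz}\bigl(c_nz^n/n!\bigr)=c_nz^n/(n-1)!$, this yields $\sum_{n\ge1}c_n(X)z^n/(n-1)!$, which is \eqref{eq:cumulant-relation}. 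Reading off the coefficient of $z^n$ gives \eqref{eq:cumulant-scaling}.

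I do not expect a genuine obstacle. The only points needing care are the following:
\begin{itemize}
\item the integrability of the mixed products $X^kY^{n-k}$ in (i), which is handled by independence;
\item the fact that all logarithms and derivatives are formal, coefficientwise operations on series with constant term $1$, so no convergence of $\Psi_X$ is required.
\end{itemize}
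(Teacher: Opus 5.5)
Your proposal is correct and follows the paper's proof essentially step for step. Part (i) is binomial expansion plus independence, matched with the $t=-1$ weights $\binom{n}{i}$. Part (ii) is the identity $\G(\Et_{-1}\mu_X)=\Psi_X$, which comes from $\fall{-1}{n}=(-1)^n n!$, substituted into \eqref{eq:K-transform} and followed by coefficient comparison. Your extra checks are valid details that the paper leaves implicit: $-1\in\Omega_{\R}$, $\mu_X\in\Ut(H\R)$, integrability of $X^kY^{n-k}$, and the purely formal nature of the logarithm.
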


\begin{proof}
The coefficient of index $n$ in the right side of \eqref{eq:independent-Hurwitz} is
\[
 \sum_{k=0}^{n}\binom{n}{k}\E[X^k]\E[Y^{n-k}]
 =\E[(X+Y)^n],
\]
where independence is used in the second equality.  Since
$\fall{-1}{n}=(-1)^n n!$, equation \eqref{eq:E-transform} gives
$\G(\Et_{-1}\mu_X)=\Psi_X$.  Substitution in \eqref{eq:K-transform} proves \eqref{eq:cumulant-relation}, and coefficient comparison gives \eqref{eq:cumulant-scaling}.
\end{proof}

If $C_X(u)=\log\E[e^{iuX}]$ is defined near $u=0$, then \eqref{eq:cumulant-relation} also gives
\begin{equation}\label{eq:characteristic-K}
 K_{-1}^H[\mu_X](iu)=u\frac{d}{du}C_X(u).
\end{equation}

\subsection{Products of beta and gamma variables}

For positive random variables, let $\boxtimes_{\mathrm{cl}}$ denote multiplicative convolution, namely the law of a product of independent variables.

\begin{proposition}\label{prop:beta-gamma}
Let $j\ge i\ge1$ and assume $j-i$ is even.  Let
$\bm a=(a_1,\ldots,a_i)$ and $\bm b=(b_1,\ldots,b_j)$ satisfy
$0<b_r<a_r$ for $1\le r\le i$ and $b_r>0$ for $i<r\le j$.
Then
\[
 \mathfrak H_{-1}\!\begin{bmatrix}\bm b\\\bm a\end{bmatrix}=\mu_X,
\]
where
\begin{multline}\label{eq:beta-gamma-law}
 \Law(X)=
 \operatorname{Beta}(b_1,a_1-b_1)\boxtimes_{\mathrm{cl}}\cdots
 \boxtimes_{\mathrm{cl}}\operatorname{Beta}(b_i,a_i-b_i)\\
 \boxtimes_{\mathrm{cl}}\operatorname{Gamma}(b_{i+1},1)
 \boxtimes_{\mathrm{cl}}\cdots
 \boxtimes_{\mathrm{cl}}\operatorname{Gamma}(b_j,1).
\end{multline}
\end{proposition}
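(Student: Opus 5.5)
The plan is to compute both sides coefficientwise and compare them directly. The parameter $t=-1$ lies in $\Omega_{\R}$. The admissibility assumption in \eqref{eq:hyper-H} holds because every factor of $\fall{-a_r}{n}$ has the form $-a_r-k$ with $a_r>0$, so it never vanishes.

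First I will evaluate \eqref{eq:hyper-H} at $t=-1$ using the conversion $\fall{-c}{n}=(-1)^n\rise{c}{n}$ from the proof of Proposition~\ref{prop:hyper-closure}. We have $\fall{-1}{n}=(-1)^n n!$, $\fall{-\bm b}{n}=(-1)^{nj}\rise{\bm b}{n}$ and $\fall{-\bm a}{n}=(-1)^{ni}\rise{\bm a}{n}$. Substituting these gives
\[
 \mathfrak H_{-1}\!\begin{bmatrix}\bm b\\\bm a\end{bmatrix}(n)
 =(-1)^{n}(-1)^{n}(-1)^{n(j-i)}\frac{\rise{\bm b}{n}}{\rise{\bm a}{n}}
 =\frac{\prod_{r=1}^{j}\rise{b_r}{n}}{\prod_{r=1}^{i}\rise{a_r}{n}}.
\]
The sign disappears exactly because $j-i$ is even. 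This sign bookkeeping is the only delicate point, and it is where the parity hypothesis is used.

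Second, I will recall the standard moment formulas. If $B\sim\operatorname{Beta}(b,a-b)$ with $0<b<a$, then
\[
 \E[B^n]=\frac{B(b+n,a-b)}{B(b,a-b)}=\frac{\rise{b}{n}}{\rise{a}{n}}
\]
by the Gamma-function expression of the Beta function. If $G\sim\operatorname{Gamma}(b,1)$ with $b>0$, then
\[
 \E[G^n]=\frac{\Gamma(b+n)}{\Gamma(b)}=\rise{b}{n}.
\]
The hypotheses $0<b_r<a_r$ for $r\le i$ and $b_r>0$ for $r>i$ are exactly what make these laws well defined. All of these variables are positive and have moments of every order.

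Third, let $X=B_1\cdots B_iG_{i+1}\cdots G_j$ with the factors independent, so that $\Law(X)$ is given by \eqref{eq:beta-gamma-law}. By independence, $\E[X^n]=\prod_r\E[B_r^n]\prod_r\E[G_r^n]$, and this is finite for every $n$. Equivalently, $\mu_X$ is the coordinatewise product of the factor moment sequences. This product equals the displayed value of $\mathfrak H_{-1}$, which gives $\mathfrak H_{-1}[\bm b;\bm a]=\mu_X$ and proves the proposition.
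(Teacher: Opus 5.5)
Your proposal is correct and follows the same route as the paper: you reduce $\mathfrak H_{-1}(n)$ to $\rise{\bm b}{n}/\rise{\bm a}{n}$ using $\fall{-c}{n}=(-1)^n\rise{c}{n}$ and the parity of $j-i$, and you match this with the moments of the independent product computed from the standard Beta and Gamma moment formulas. Your additional checks, namely that the factors of $\fall{-\bm a}{n}$ never vanish and that $X$ has moments of every order, are details the paper leaves implicit.
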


\begin{proof}
The $n$th moment of the product in \eqref{eq:beta-gamma-law} is
\[
 \frac{\rise{\bm b}{n}}{\rise{\bm a}{n}}.
\]
Using $\fall{-c}{n}=(-1)^n\rise{c}{n}$ in \eqref{eq:hyper-H}, the parity condition gives
\[
 \mathfrak H_{-1}\!\begin{bmatrix}\bm b\\\bm a\end{bmatrix}(n)
 =\frac{\rise{\bm b}{n}}{\rise{\bm a}{n}}.
\]
\end{proof}

For $\lambda>0$, substituting $t=-1$ in \eqref{eq:Laguerre} and using the gamma moment formula $\E[X^n]=\rise{\lambda}{n}$ gives
\begin{equation}\label{eq:gamma-Laguerre}
 \mathsf L_{\lambda}^{(-1)}=\mu_X,
 \qquad X\sim\operatorname{Gamma}(\lambda,1).
\end{equation}

\subsection{Self-decomposable laws}

A random variable $X$ is self-decomposable if, for each $c\in(0,1)$, one can write
$X\stackrel{d}{=}cX+X_c$ with $X_c$ independent of $X$.  Such a law has a background driving L\'evy process $Z$ satisfying the random integral representation
\[
 X\stackrel{d}{=}\int_0^\infty e^{-s}\,dZ(s),
\]
and its characteristic exponent obeys
\begin{equation}\label{eq:BDLP-classical}
 C_{Z(s)}(u)=su\frac{d}{du}C_X(u).
\end{equation}
See \cite{JurekVervaat1983,Jurek2001}.

\begin{proposition}\label{prop:BDLP}
Let $X$ be self-decomposable, have moments of every order, and have background driving process $Z$.  Then
\begin{equation}\label{eq:BDLP-Hurwitz}
 C_{Z(s)}(u)=sK_{-1}^H[\mu_X](iu),\qquad s\ge0.
\end{equation}
\end{proposition}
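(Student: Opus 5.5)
The plan is to combine the classical background-driving identity \eqref{eq:BDLP-classical} with the characteristic-function form \eqref{eq:characteristic-K} of Theorem~\ref{thm:classical}. Since $X$ has moments of every order, $C_X(u)=\log\E[e^{iuX}]$ is defined on a neighbourhood of $u=0$, namely wherever $\E[e^{iuX}]\neq0$. Taking the principal branch with $C_X(0)=0$, this function is smooth there. Its Taylor coefficients at $0$ are $c_n(X)i^n/n!$, with the $c_n(X)$ given by \eqref{eq:classical-cumulants}.

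First I would record the identity \eqref{eq:characteristic-K}, namely $K_{-1}^H[\mu_X](iu)=u\,C_X'(u)$, and be precise about its meaning. By \eqref{eq:cumulant-relation}, the left side is the series $\sum_{n\ge1}c_n(X)(iu)^n/(n-1)!$. Differentiating the Taylor expansion of $C_X$ termwise shows that $u\,C_X'(u)$ has exactly this expansion.

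Second, I would substitute this into \eqref{eq:BDLP-classical}, which gives $C_{Z(s)}(u)=s\,u\,C_X'(u)=s\,K_{-1}^H[\mu_X](iu)$ for every $s\ge0$. At $s=0$ both sides vanish, since $Z(0)=0$.

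The main obstacle is interpretive rather than computational. In general $K_{-1}^H[\mu_X]$ is only a formal power series, because the moment series of $X$ need not converge. I would therefore read \eqref{eq:BDLP-Hurwitz} in whichever of two senses applies:
\begin{enumerate}[label=\textup{(\alph*)},leftmargin=2.2em]
\item as an identity of formal Taylor expansions at $u=0$, which is always meaningful because $C_{Z(s)}$ and $u\,C_X'$ are both smooth near $0$ once $X$ has all moments;
\item as a genuine identity of functions near $0$, whenever $\Psi_X$ has a positive radius of convergence.
\end{enumerate}
In case (a), matching Taylor coefficients reduces the claim to the cumulant identity $c_n(Z(s))=s\,n\,c_n(X)$. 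This identity follows by differentiating \eqref{eq:BDLP-classical} at $u=0$. It also shows that $Z(s)$ has moments of every order, because its cumulants of every order are finite. Consistency with \eqref{eq:cumulant-scaling} then gives the coefficientwise statement $\kappa_n^{-1}$-form $c_n(Z(s))/(n-1)!=s\,n\,\kappa_n^{-1}(\mu_X)\cdot\frac{(n-1)!}{(n-1)!}$, which I would check as the final step.
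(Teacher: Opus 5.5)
Your proposal is correct and is essentially the paper's argument, which consists only of substituting \eqref{eq:characteristic-K} into \eqref{eq:BDLP-classical}. Your separation of the formal reading (coefficientwise $c_n(Z(s))=s\,n\,c_n(X)$, equivalently $c_n(Z(s))/n!=s\,\kappa_n^{-1}(\mu_X)$) from the analytic reading when $\Psi_X$ has positive radius of convergence is a useful precision the paper leaves implicit, and the stray factor $\frac{(n-1)!}{(n-1)!}$ in your last formula equals $1$ and should simply be dropped.
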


\begin{proof}
Combine \eqref{eq:BDLP-classical} with \eqref{eq:characteristic-K}.
\end{proof}

\begin{example}\label{ex:BDLP-examples}
\begin{enumerate}[label=\textup{(\roman*)},leftmargin=2.2em]
\item If $X\sim N(0,1)$, then $\mu_X=\mathsf H^{(-1)}$.  Since
$K_{-1}^H[\mathsf H^{(-1)}](iu)=-u^2$, the background driving process is a Brownian motion with variance rate $2$, equivalently $Z(s)=\sqrt2\,B(s)$ for a standard Brownian motion $B$.
\item If $X\sim\operatorname{Gamma}(\lambda,1)$, then
$\mu_X=\mathsf L_{\lambda}^{(-1)}$.  Its background driving process is a compound Poisson process with rate $\lambda$ and unit-exponential jumps \cite{Jurek2001}.
\end{enumerate}
\end{example}

\section{Banach algebras and infinitesimal generators}\label{sec:generators}

\subsection{A weighted Hurwitz algebra}

Fix $t\in\R\setminus\Z_{\ge0}$ and $r>0$.  Define
\begin{equation}\label{eq:ArH}
 \ArH
 =\left\{f\in H\R:
 \|f\|_{r,t}:=\sum_{n\ge0}
 \left|\frac{f(n)}{\fall{t}{n}}\right|r^n<\infty\right\}
\end{equation}
and let
\begin{equation}\label{eq:Ar}
 \Ar=\left\{F(z)=\sum_{n\ge0}a_nz^n:
 \|F\|_r:=\sum_{n\ge0}|a_n|r^n<\infty\right\}.
\end{equation}
The latter is the weighted Wiener algebra under the Cauchy product.

\begin{proposition}\label{prop:Banach}
The pair $(\ArH,\hconv{t})$ is a unital commutative Banach algebra, and
\begin{equation}\label{eq:Banach-Phi}
 \Phi_t^H:(\ArH,\hconv{t})\longrightarrow(\Ar,\cdot)
\end{equation}
is an isometric algebra isomorphism.
\end{proposition}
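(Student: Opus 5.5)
The plan is to transport everything from $(\Ar,\cdot)$ to $(\ArH,\hconv{t})$ through $\Phi_t^H$, using Proposition~\ref{prop:algebra-isomorphism}. Since $t\in\R\setminus\Z_{\ge0}$, every $\fall{t}{n}$ is a nonzero real number, so $\Phi_t^H$ is a linear bijection $H\R\to\R[[z]]$. By the definitions \eqref{eq:ArH} and \eqref{eq:Ar},
\[
 \|\Phi_t^H(f)\|_r=\sum_{n\ge0}\left|\frac{f(n)}{\fall{t}{n}}\right|r^n=\|f\|_{r,t}.
\]
Hence $\Phi_t^H$ maps $\ArH$ bijectively onto $\Ar$ and is isometric. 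The inverse sends $F=\sum a_nz^n$ to $f(n)=\fall{t}{n}a_n$, and it is likewise norm-preserving.

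Next I would check that $\Ar$ is a unital commutative Banach algebra under the Cauchy product. The map $F\mapsto(a_nr^n)_n$ is an isometric isomorphism onto the sequence space $\ell^1(\N)$, so $\Ar$ is complete. For the submultiplicativity estimate, if $F=\sum a_nz^n$ and $G=\sum b_nz^n$, then
\[
 \|FG\|_r=\sum_n\Bigl|\sum_{i+j=n}a_ib_j\Bigr|r^n\le\sum_{i,j}|a_i|r^i|b_j|r^j=\|F\|_r\|G\|_r .
\]
This shows that $FG\in\Ar$. The unit $1$ has norm $1$, and commutativity is immediate.

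It remains to pull the structure back. Because \eqref{eq:Phi-multiplicative} holds, the bijection $\Phi_t^H$ identifies $f\hconv{t}g$ with $\Phi_t^H(f)\Phi_t^H(g)$, so $\ArH$ is closed under $\hconv{t}$. The isometry then gives
\[
 \|f\hconv{t}g\|_{r,t}\le\|f\|_{r,t}\|g\|_{r,t}.
\]
Completeness transfers through the isometric linear bijection. The unit is $h_1$, with $\|h_1\|_{r,t}=1$ because $\fall{t}{0}=1$. Commutativity and associativity already come from Proposition~\ref{prop:algebra-isomorphism}.

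There is no serious obstacle. The only point needing care is that the weights $1/\fall{t}{n}$ are well defined and nonzero, and that holds precisely because $t\notin\Z_{\ge0}$.
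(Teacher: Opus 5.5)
Your proposal is correct and follows the same route as the paper: the norm identity $\|\Phi_t^H(f)\|_r=\|f\|_{r,t}$ together with the multiplicativity \eqref{eq:Phi-multiplicative} lets you transport completeness, closure under the product, and submultiplicativity from $(\Ar,\cdot)$ to $(\ArH,\hconv{t})$. You additionally verify the Banach-algebra properties of $\Ar$ (via the isometry with $\ell^1(\N)$ and the Cauchy-product estimate) and the normalization $\|h_1\|_{r,t}=1$, which the paper only asserts.
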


\begin{proof}
Equation \eqref{eq:Phi-multiplicative} gives multiplicativity, while
$\|\Phi_t^H(f)\|_r=\|f\|_{r,t}$.  Completeness and the submultiplicative norm follow from the corresponding facts for $\Ar$.
\end{proof}

\subsection{Norm-continuous Hurwitz convolution semigroups}

\begin{definition}\label{def:semigroup}
A family $Q=\{q_s\}_{s\ge0}\subset\ArH$ is a norm-continuous $\hconv{t}$-semigroup if
\[
 q_0=h_1,\qquad q_{s+u}=q_s\hconv{t}q_u,
 \qquad\text{and}\qquad
 \|q_s-h_1\|_{r,t}\longrightarrow0\quad(s\downarrow0).
\]
Its convolution operators are
\[
 T_s f=f\hconv{t}q_s.
\]
\end{definition}

\begin{theorem}[Generator representation]\label{thm:generator}
Let $Q$ be a norm-continuous $\hconv{t}$-semigroup in $\ArH$.  Then the limit
\begin{equation}\label{eq:eta}
 \eta_Q=\lim_{s\downarrow0}
 \frac{\Phi_t^H(q_s)-1}{s}
\end{equation}
exists in $\Ar$, and
\begin{equation}\label{eq:semigroup-exponential}
 \Phi_t^H(q_s)=\exp(s\eta_Q),\qquad s\ge0.
\end{equation}
The infinitesimal generator is the bounded operator
\begin{equation}\label{eq:generator-conjugacy}
 L_{r,H}^{(t)}[Q]
 =(\Phi_t^H)^{-1}\circ M_{\eta_Q}\circ\Phi_t^H,
\end{equation}
where $M_{\eta_Q}F=\eta_QF$.  Equivalently,
\begin{equation}\label{eq:generator-limit}
 L_{r,H}^{(t)}[Q]f
 =\lim_{s\downarrow0}\frac{f\hconv{t}q_s-f}{s}
 \quad\text{for every }f\in\ArH.
\end{equation}
\end{theorem}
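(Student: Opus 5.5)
Transport everything to the weighted Wiener algebra through the isometric isomorphism $\Phi_t^H$ of Proposition~\ref{prop:Banach}. Put $F_s=\Phi_t^H(q_s)\in\Ar$. Then $F_0=1$ and $F_{s+u}=F_sF_u$, and $\|F_s-1\|_r=\|q_s-h_1\|_{r,t}\to0$ as $s\downarrow0$. So $\{F_s\}$ is a norm-continuous one-parameter semigroup in the unital commutative Banach algebra $\Ar$. The statement then reduces to the classical fact that such a semigroup is the exponential of a unique element of the algebra.

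\textbf{Step 1: construct a logarithm.} Choose $\delta>0$ with $\|F_s-1\|_r<1$ for $0\le s\le\delta$. On this range, set
\[
 G_s=\log F_s=\sum_{k\ge1}\frac{(-1)^{k+1}}{k}(F_s-1)^k ,
\]
which converges in $\Ar$ and satisfies $\exp(G_s)=F_s$. Because $\Ar$ is commutative, $\exp$ is a homomorphism from $(\Ar,+)$ to the group of units. If $s,u,s+u\le\delta$, then $\exp(G_{s+u})=\exp(G_s+G_u)$. Continuity of $s\mapsto G_s$, which holds because $\log$ is continuous on the ball $\|F-1\|_r<1$, together with $G_0=0$, lets us shrink $\delta$ so that $\|G_s\|_r$ is small. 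Then the power-series logarithm, being injective near $1$, gives $G_{s+u}=G_s+G_u$.

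\textbf{Step 2: additivity forces linearity.} A continuous additive map $[0,\delta]\to\Ar$ is linear. Indeed, $G_{s/N}=G_s/N$ gives linearity on dyadic rationals, and continuity extends it to all $s$. Hence $G_s=s\eta_Q$ with $\eta_Q:=G_\delta/\delta$. For arbitrary $s\ge0$, write $s=N(s/N)$ with $s/N\le\delta$. The semigroup law gives $F_s=F_{s/N}^N=\exp(s\eta_Q)$, which is \eqref{eq:semigroup-exponential}.

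\textbf{Step 3: the limit and the generator.} The limit \eqref{eq:eta} now follows from the bound
\[
 \|\bigl(\exp(s\eta)-1\bigr)/s-\eta\|_r\le\sum_{k\ge2}s^{k-1}\|\eta\|_r^k/k!\to0 .
\]
For the generator, let $f\in\ArH$ and $F=\Phi_t^H(f)$. By \eqref{eq:Phi-multiplicative},
\[
 \Phi_t^H\!\left(\frac{f\hconv{t}q_s-f}{s}\right)=F\,\frac{F_s-1}{s}\longrightarrow \eta_QF ,
\]
and submultiplicativity gives a bound $\|F\|_r$ times the previous error, uniformly for $\|f\|_{r,t}\le1$. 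Pulling back by the isometry yields \eqref{eq:generator-limit} together with the conjugacy formula \eqref{eq:generator-conjugacy}. Boundedness follows from $\|M_{\eta_Q}\|\le\|\eta_Q\|_r$. The convergence is in operator norm, so $L_{r,H}^{(t)}[Q]$ is the generator in the strongest sense.

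The main obstacle is Step 1: identifying $G_{s+u}$ with $G_s+G_u$ from equality of their exponentials. I would handle it as described, using smallness of $G_s$ near $0$ and local injectivity of $\exp$. An alternative that avoids this is the standard argument: $V=\int_0^\epsilon F_u\,du$ is invertible for small $\epsilon$, and $(F_s-1)V=\int_\epsilon^{\epsilon+s}F_u\,du-\int_0^sF_u\,du$ gives differentiability directly. Either route uses only commutativity and completeness of $\Ar$.
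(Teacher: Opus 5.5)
Your proof is correct, but it takes a different route from the paper. The paper argues at the operator level. For $T_sf=f\hconv{t}q_s$ on $\ArH$ it checks $\|T_s-I\|=\|q_s-h_1\|_{r,t}$, so $\{T_s\}$ is a uniformly continuous operator semigroup. It then cites the standard theorem that such a semigroup equals $e^{sA}$ with $A$ bounded, and conjugates by $\Phi_t^H$.

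You instead stay inside the commutative Banach algebra $\Ar$. You produce $\eta_Q=\delta^{-1}\log F_\delta$ from a local logarithm and the Cauchy functional equation, and only afterwards pass to operators by multiplying by $\Phi_t^H(f)$. This makes your version self-contained, and it proves the existence of the limit \eqref{eq:eta} and the formula \eqref{eq:semigroup-exponential} directly as statements about elements. In the paper these facts have to be extracted from the abstract generator $A$: put $\eta_Q=\Phi_t^H(Ah_1)$, note $Af=f\hconv{t}Ah_1$ by continuity of $\hconv{t}$, and apply $e^{sA}$ to $h_1$. The paper compresses this step into one sentence.

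The paper's route is shorter and avoids logarithms, at the price of the citation. Your alternative with $V=\int_0^\epsilon F_u\,du$ is exactly the proof of the cited theorem, transplanted from bounded operators to $\Ar$.

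Two small points of precision. First, the hypothesis gives only continuity as $s\downarrow0$, so continuity of $s\mapsto G_s$ on $[0,\delta]$ should be deduced after additivity, from $G_{s+h}-G_s=G_h$; before that point, continuity at $0$ is all you actually use. Second, the fact doing the work in Step~1 is $\log(\exp A)=A$ for $\|A\|_r<\log 2$, i.e.\ injectivity of $\exp$ near $0$, not injectivity of $\log$ near $1$.
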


\begin{proof}
Multiplication by $q_s$ has operator norm $\|q_s\|_{r,t}$, and
\[
 \|T_s-I\|=\|q_s-h_1\|_{r,t};
\]
the reverse inequality follows by applying $T_s-I$ to $h_1$.  Hence $\{T_s\}$ is a uniformly continuous operator semigroup.  The standard semigroup theorem gives a bounded generator and an operator-norm exponential representation \cite{EngelNagel2000}.  Conjugating by the isometry $\Phi_t^H$ turns $T_s$ into multiplication by $\Phi_t^H(q_s)$.  The operator-norm derivative is therefore multiplication by the $\Ar$-limit in \eqref{eq:eta}, which proves \eqref{eq:semigroup-exponential}--\eqref{eq:generator-limit}.
\end{proof}

\begin{example}[Hermite generator]\label{ex:Hermite-generator}
Let $q_s=\mathsf H_s^{(t)}$.  For every $r>0$,
\[
 \Phi_t^H(q_s)(z)=\exp\left(-\frac{s}{2t}z^2\right),
 \qquad
 \eta_Q(z)=-\frac{z^2}{2t}.
\]
If $f(n)=a_n$, then
\begin{equation}\label{eq:Hermite-generator}
 \G\bigl(L_{r,H}^{(t)}[Q]f\bigr)(z)
 =-\frac{1}{2t}\sum_{n\ge0}
 \frac{\fall{t}{n+2}}{\fall{t}{n}}a_nz^{n+2}
 =-\frac{1}{2t}\sum_{n\ge0}(t-n)(t-n-1)a_nz^{n+2}.
\end{equation}
\end{example}

\begin{example}[Laguerre generator]\label{ex:Laguerre-generator}
Let $q_s=\mathsf L_s^{(t)}$.  If $0<r<|t|$, then
\[
 \Phi_t^H(q_s)(z)=\left(1-\frac{z}{t}\right)^{st},
 \qquad
 \eta_Q(z)=t\log\left(1-\frac{z}{t}\right)
 =-\sum_{\ell\ge1}\frac{z^\ell}{\ell t^{\ell-1}}.
\]
Therefore
\begin{equation}\label{eq:Laguerre-generator}
 \G\bigl(L_{r,H}^{(t)}[Q]f\bigr)(z)
 =-\sum_{k\ge1}\sum_{\ell=1}^{k}
 \frac{\fall{t-k+\ell}{\ell}}{\ell t^{\ell-1}}
 f(k-\ell)z^k.
\end{equation}
\end{example}

\subsection{Finite free generators}

In this subsection, let $R=\R$ or $\C$.  Let $q=\{q_s\}_{s\ge0}\subset R[x]_d$ satisfy
\[
 q_0(x)=x^d,\qquad q_{s+u}=q_s\boxplus_d q_u,
 \qquad q_s\longrightarrow x^d\quad(s\downarrow0)
\]
coefficientwise.  Extend the dilation notation to $R[x]_d$ by
\[
 D_c\left(\sum_{k=0}^{d}a_kx^{d-k}\right)
 =\sum_{k=0}^{d}c^ka_kx^{d-k};
\]
then $\Jd(D_cp)=D_c\Jd(p)$.  The operators $S_s p=p\boxplus_d q_s$ form a continuous semigroup on the finite-dimensional space $R[x]_d$.  Its generator is therefore defined for every $p\in R[x]_d$ by
\[
 L[q]p=\lim_{s\downarrow0}\frac{p\boxplus_dq_s-p}{s},
\]
where the limit is coefficientwise.

\begin{theorem}\label{thm:finite-generator}
For every $p\in R[x]_d$,
\begin{equation}\label{eq:finite-generator-intertwining}
 \Jd(L[q]p)
 =L_H^{(d)}[\Jd(q)]\,\Jd(p),
\end{equation}
where the right-hand side is the generator associated with $\hconv{d}$ on $h_{\le d}R$.
\end{theorem}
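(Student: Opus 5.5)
The plan is to transport the finite free semigroup to $h_{\le d}R$ through $\Jd$, and to compute both generators there as coefficientwise limits.

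\emph{Step 1 (the semigroup transports).} By the intertwining relation \eqref{eq:J-intertwines}, the family $\Jd(q)=\{\Jd(q_s)\}_{s\ge0}$ satisfies
\[
 \Jd(q_0)=\Jd(x^d)=h_1,\qquad
 \Jd(q_{s+u})=\Jd(q_s)\hconv{d}\Jd(q_u).
\]
Since $\Jd$ only relabels coefficients, $\Jd(q_s)\to h_1$ coefficientwise as $s\downarrow0$. Hence $\Jd(q)$ is a continuous $\hconv{d}$-semigroup in the finite-dimensional algebra $(h_{\le d}R,\hconv{d})$. The operators $T_sf=f\hconv{d}\Jd(q_s)$ form a continuous semigroup on a finite-dimensional space, so the generator
\[
 L_H^{(d)}[\Jd(q)]f=\lim_{s\downarrow0}\frac{f\hconv{d}\Jd(q_s)-f}{s}
\]
exists for every $f$. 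This is the finite-dimensional analogue of \eqref{eq:generator-limit}. If needed, existence can also be seen via $\Phi_d^{H,[d]}$: the semigroup becomes $\exp(s\eta)$ in $R[z]/(z^{d+1})$, and $L_H^{(d)}$ is conjugate to multiplication by $\eta$.

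\emph{Step 2 (conjugacy of the operators).} Applying \eqref{eq:J-intertwines} again gives
\[
 \Jd(S_sp)=\Jd(p\boxplus_dq_s)=\Jd(p)\hconv{d}\Jd(q_s)=T_s\Jd(p),
\]
so $\Jd\circ S_s=T_s\circ\Jd$ for all $s\ge0$. Because $\Jd$ is a linear bijection $R[x]_d\to h_{\le d}R$ that is a coordinate identification, it is a homeomorphism for the coefficientwise topologies. It therefore commutes with difference quotients and with coefficientwise limits:
\[
 \Jd(L[q]p)=\lim_{s\downarrow0}\frac{\Jd(S_sp)-\Jd(p)}{s}
 =\lim_{s\downarrow0}\frac{T_s\Jd(p)-\Jd(p)}{s}
 =L_H^{(d)}[\Jd(q)]\Jd(p),
\]
which is \eqref{eq:finite-generator-intertwining}.

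\emph{Expected obstacle.} The main point to settle is that the right-hand generator is well defined as a limit, not merely as a formal object. One must also check that the truncated product $\hconv{d}$, which discards coefficients above $d$, is compatible with the conjugation. Existence is not automatic from a bare one-sided coefficientwise continuity hypothesis. I would handle it by passing through $\Phi_d^{H,[d]}$ to the truncated polynomial algebra $R[z]/(z^{d+1})$. There a continuous multiplicative one-parameter family of principal units has the form $\exp(s\eta)$ with $\eta$ nilpotent (this uses the standard continuous-implies-measurable-implies-exponential argument, applied to finite-dimensional matrices). This gives differentiability, and existence of $L[q]$ then follows by the same conjugation.
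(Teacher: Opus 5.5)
This is correct and is exactly the paper's argument: apply $\Jd$ to the difference quotient, use \eqref{eq:J-intertwines} to get $\Jd\circ S_s=T_s\circ\Jd$, and pass to the coefficientwise limit, which $\Jd$ respects because it only relabels coordinates. Your ``expected obstacle'' needs no separate treatment. A semigroup on a finite-dimensional space that is continuous as $s\downarrow0$ is already uniformly continuous, hence of the form $e^{sA}$, and existence of either limit transfers to the other through the bijection $\Jd$. (Incidentally, $q_s$ need not be monic: its leading coefficient is only forced to be $e^{cs}$, so your $\eta$ need not be nilpotent.)
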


\begin{proof}
Apply $\Jd$ to the difference quotient and use \eqref{eq:J-intertwines}; then pass to the coefficientwise limit.
\end{proof}

\begin{example}[Finite free heat generator]\label{ex:finite-heat}
For $q_s=D_{\sqrt s}H_d$ and
$p(x)=\sum_{k=0}^{d}a_kx^{d-k}$,
\begin{equation}\label{eq:finite-heat-H}
 \G\bigl(L_H^{(d)}[\Jd(q)]\Jd(p)\bigr)(z)
 =-\frac{1}{2d}\sum_{k=0}^{d-2}(d-k)(d-k-1)a_kz^{k+2}.
\end{equation}
Equivalently,
\begin{equation}\label{eq:finite-heat-p}
 L[q]p=-\frac{1}{2d}\frac{d^2p}{dx^2}.
\end{equation}
\end{example}

\subsection{L\'evy--Khintchine generators}

Let $X=\{X(s)\}_{s\ge0}$ be a real L\'evy process with characteristic triplet $(\gamma,a,\nu)$, using the truncation $x\1_{\{|x|\le1\}}$.  Suppose that, for some $\rho>0$,
\begin{equation}\label{eq:exp-moment}
 \int_{|x|>1}e^{\rho|x|}\,\nu(dx)<\infty.
\end{equation}
For $0<r<\rho$, define
\begin{equation}\label{eq:eta-LK}
 \eta_{\gamma,a,\nu}(z)
 =-\gamma z+\frac{a}{2}z^2
 +\int_{\R}\left(e^{-zx}-1+zx\1_{\{|x|\le1\}}\right)\nu(dx).
\end{equation}

\begin{lemma}\label{lem:eta-Ar}
Under \eqref{eq:exp-moment}, $\eta_{\gamma,a,\nu}\in\Ar$ for every $0<r<\rho$.
\end{lemma}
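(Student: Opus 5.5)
The plan is to handle the three pieces of $\eta_{\gamma,a,\nu}$ separately, since $\Ar$ is a linear space with norm $\|F\|_r=\sum_n|a_n|r^n$. The polynomial part $-\gamma z+\frac a2 z^2$ lies in $\Ar$ trivially. For the integral term, I will expand $e^{-zx}-1+zx\1_{\{|x|\le1\}}$ in powers of $z$ and bound the resulting coefficients. First I split the integral into the regions $|x|\le1$ and $|x|>1$. On $|x|\le1$ the integrand is $\sum_{n\ge2}(-x)^nz^n/n!$. On $|x|>1$ it is $-1+\sum_{n\ge1}(-x)^nz^n/n!$; the constant $-\nu(\{|x|>1\})$ is finite because $\nu$ is a L\'evy measure, or alternatively because of \eqref{eq:exp-moment}.

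The key step is to justify exchanging the sum and the integral, and to bound the $\Ar$ norm. For this I estimate the sum of absolute values:
\[
\sum_{n\ge2}\frac{r^n}{n!}\int_{|x|\le1}|x|^n\,\nu(dx)\le \sum_{n\ge2}\frac{r^n}{n!}\int_{|x|\le1}x^2\,\nu(dx)\le e^{r}\int_{|x|\le1}x^2\,\nu(dx)<\infty,
\]
using $|x|^n\le x^2$ for $|x|\le1$ and $n\ge2$. On the outer region,
\[
\sum_{n\ge1}\frac{r^n}{n!}\int_{|x|>1}|x|^n\,\nu(dx)\le\int_{|x|>1}e^{r|x|}\,\nu(dx)\le\int_{|x|>1}e^{\rho|x|}\,\nu(dx)<\infty,
\]
where I use $r<\rho$ and Tonelli. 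These bounds show that the coefficients
\[
c_n=\frac{(-1)^n}{n!}\int x^n\1\,\nu(dx)
\]
(with the appropriate region indicators) are well defined. They also show that Fubini applies, so for $|z|\le r$ the integral equals the power series $\sum c_nz^n$. Summing the three norm bounds then gives $\|\eta_{\gamma,a,\nu}\|_r<\infty$.

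The only delicate point is the exchange of summation and integration together with the finiteness of the moments $\int_{|x|>1}|x|^n\,d\nu$. The Tonelli estimate above settles both, so I do not expect a serious obstacle. Strict inequality $r<\rho$ is used in the outer bound, though $r\le\rho$ would in fact suffice there.
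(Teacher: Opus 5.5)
Your proof is correct and follows essentially the paper's route: Tonelli to swap sum and integral, the $O(x^2)$ bound near the origin, and the exponential moment \eqref{eq:exp-moment} on $|x|>1$. The paper sums $\int_{\R}(e^{r|x|}-1-r|x|)\,\nu(dx)$ for $n\ge2$ and treats the linear coefficient separately, whereas you split the domain first; one small slip is that on $|x|>1$ one has $e^{-zx}-1=\sum_{n\ge1}(-x)^nz^n/n!$ with no extra $-1$, so the integrand has no constant term and there is no coefficient $-\nu(\{|x|>1\})$, though this does not affect your norm bounds since the $n\ge1$ estimate you display is the right one.
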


\begin{proof}
For $n\ge2$, the coefficient contributed by the integral in \eqref{eq:eta-LK} is
$(-1)^n\int x^n\nu(dx)/n!$.  Tonelli's theorem and $r<\rho$ give
\[
 \sum_{n\ge2}\frac{r^n}{n!}\int_{\R}|x|^n\nu(dx)
 =\int_{\R}\bigl(e^{r|x|}-1-r|x|\bigr)\nu(dx)<\infty.
\]
Near the origin, the integrand is $O(x^2)$; outside $[-1,1]$, finiteness follows from \eqref{eq:exp-moment}.  The linear coefficient is also finite because the large-jump first moment is exponentially integrable.
\end{proof}

Let $\mu_s=\mu_{X(s)}$ be the moment series.  Independent stationary increments and Theorem~\ref{thm:classical} give
$\mu_{s+u}=\mu_s\hconv{-1}\mu_u$.

\begin{theorem}[L\'evy--Khintchine generator]\label{thm:LK-generator}
Assume \eqref{eq:exp-moment} and fix $0<r<\rho$.  Then
$M=\{\mu_s\}_{s\ge0}$ is a norm-continuous $\hconv{-1}$-semigroup in
$\mathcal A_{r,H}^{(-1)}$, and
\begin{equation}\label{eq:LK-generator}
 L_{r,H}^{(-1)}[M]
 =(\Phi_{-1}^H)^{-1}\circ
 M_{\eta_{\gamma,a,\nu}}\circ\Phi_{-1}^H.
\end{equation}
Moreover,
\begin{equation}\label{eq:LK-transform}
 \Phi_{-1}^H(\mu_s)(z)
 =\E[e^{-zX(s)}]
 =\exp\bigl(s\eta_{\gamma,a,\nu}(z)\bigr).
\end{equation}
\end{theorem}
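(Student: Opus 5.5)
The plan is to reduce everything to the isometry of Proposition~\ref{prop:Banach} and then invoke Theorem~\ref{thm:generator}. At $t=-1$ we have $\fall{-1}{n}=(-1)^nn!$. Hence
\[
\Phi_{-1}^H(\mu_s)(z)=\sum_{n\ge0}\frac{\E[X(s)^n]}{n!}(-z)^n,
\qquad
\|\mu_s\|_{r,-1}=\sum_{n\ge0}\frac{|\E[X(s)^n]|}{n!}r^n\le\E\bigl[e^{r|X(s)|}\bigr].
\]

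\textbf{Step 1: exponential moments.} By \eqref{eq:exp-moment} and the standard moment criterion for L\'evy processes (Sato, Thm.~25.3 with the submultiplicative function $e^{\rho|x|}$), we get $\E[e^{\rho|X(s)|}]<\infty$ for all $s$. Thus $\mu_s\in\mathcal A_{r,H}^{(-1)}$ whenever $r<\rho$. On the strip $|\operatorname{Re}z|<\rho$, the map $z\mapsto\E[e^{-zX(s)}]$ is analytic. Moreover, Fubini, justified by the domination $e^{r|X(s)|}$, lets us sum the moment series termwise on $|z|\le r$; this gives the first equality in \eqref{eq:LK-transform}.

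\textbf{Step 2: identify the exponent.} The L\'evy--Khintchine formula gives
\[
\E[e^{iuX(s)}]=\exp\bigl(s\psi(u)\bigr),\qquad \psi(u)=i\gamma u-\tfrac a2u^2+\int\bigl(e^{iux}-1-iux\1_{\{|x|\le1\}}\bigr)\nu(dx).
\]
Substituting $u=iz$ turns $\psi(iz)$ into exactly $\eta_{\gamma,a,\nu}(z)$ of \eqref{eq:eta-LK}. By Lemma~\ref{lem:eta-Ar}, $\eta_{\gamma,a,\nu}$ is analytic on $|z|<\rho$ (indeed in $\Ar$ for each $r<\rho$). Both sides of $\E[e^{-zX(s)}]=\exp(s\eta(z))$ are analytic on the disc $|z|<\rho$ and agree on the imaginary axis, so the identity theorem yields \eqref{eq:LK-transform}. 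This analytic continuation, in particular justifying that the Laplace transform is analytic on the disc and that differentiation under the $\nu$-integral is legitimate, is the main technical point. I would handle it by the same Tonelli bound used in Lemma~\ref{lem:eta-Ar}.

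\textbf{Step 3: semigroup and generator.} The semigroup law $\mu_{s+u}=\mu_s\hconv{-1}\mu_u$ was already noted before the theorem, and $\mu_0=h_1$. For norm continuity, Proposition~\ref{prop:Banach} gives
\[
\|\mu_s-h_1\|_{r,-1}=\|\exp(s\eta)-1\|_r\le e^{s\|\eta\|_r}-1\to0,
\]
using that $\Ar$ is a Banach algebra and $\eta\in\Ar$. The same expansion shows that $(\exp(s\eta)-1)/s\to\eta$ in $\Ar$, so $\eta_M=\eta_{\gamma,a,\nu}$. Theorem~\ref{thm:generator} then yields \eqref{eq:LK-generator}.
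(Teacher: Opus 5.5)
Your proposal is correct and follows the same route as the paper's proof. Exponential moments give absolute summability of the moment series and the first equality in \eqref{eq:LK-transform}; the L\'evy--Khintchine formula gives the second; Lemma~\ref{lem:eta-Ar} places $\exp(s\eta_{\gamma,a,\nu})$ in $\Ar$ with norm continuity in $s$; and Proposition~\ref{prop:Banach} with Theorem~\ref{thm:generator} yields \eqref{eq:LK-generator}. You also supply three details the paper leaves implicit: the submultiplicative-moment criterion giving $\E[e^{\rho|X(s)|}]<\infty$, the analytic continuation from the imaginary axis to $|z|<\rho$ via $\psi(iz)=\eta_{\gamma,a,\nu}(z)$, and the estimate $\|e^{s\eta}-1\|_r\le e^{s\|\eta\|_r}-1$, which identifies $\eta_M=\eta_{\gamma,a,\nu}$.
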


\begin{proof}
The exponential-moment assumption implies $\E[e^{r|X(s)|}]<\infty$ for every $s\ge0$ and $r<\rho$, so the moment expansion may be summed absolutely on $|z|\le r$.  The equality on the left of \eqref{eq:LK-transform} then follows from
$\fall{-1}{n}=(-1)^n n!$.  The L\'evy--Khintchine formula gives the second equality.  Lemma~\ref{lem:eta-Ar} implies that the exponential in \eqref{eq:LK-transform} belongs to $\Ar$ and depends continuously on $s$ in the $\Ar$ norm.  Proposition~\ref{prop:Banach} and Theorem~\ref{thm:generator} now yield \eqref{eq:LK-generator}.
\end{proof}

\begin{corollary}[Transformed forward equation]\label{cor:forward}
For $f\in\mathcal A_{r,H}^{(-1)}$, set
$F_s=f\hconv{-1}\mu_s$.  Then
\begin{equation}\label{eq:forward}
 \frac{\partial}{\partial s}\Phi_{-1}^H(F_s)(z)
 =\eta_{\gamma,a,\nu}(z)\Phi_{-1}^H(F_s)(z),
 \qquad |z|\le r.
\end{equation}
\end{corollary}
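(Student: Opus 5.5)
The plan is to read the forward equation directly off Theorem~\ref{thm:LK-generator}, going through the isometric isomorphism of Proposition~\ref{prop:Banach}. By \eqref{eq:Phi-multiplicative} at $t=-1$, we have $\Phi_{-1}^H(F_s)=\Phi_{-1}^H(f)\,\Phi_{-1}^H(\mu_s)$. By \eqref{eq:LK-transform}, $\Phi_{-1}^H(\mu_s)=\exp(s\eta_{\gamma,a,\nu})$ in $\Ar$. Combining these, $G_s:=\Phi_{-1}^H(F_s)=\Phi_{-1}^H(f)\exp(s\eta_{\gamma,a,\nu})$. Here the exponential is the Banach-algebra exponential in $\Ar$, and it is well defined because Lemma~\ref{lem:eta-Ar} puts $\eta_{\gamma,a,\nu}$ in $\Ar$.

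The next step is to differentiate in $s$ in the $\Ar$-norm. The map $s\mapsto\exp(s\eta)$ is norm-differentiable in any unital Banach algebra, with derivative $\eta\exp(s\eta)$; this follows from the power series and the estimate $\|\exp(h\eta)-1-h\eta\|_r\le e^{|h|\|\eta\|_r}-1-|h|\|\eta\|_r$. Multiplying by the fixed element $\Phi_{-1}^H(f)$ is a bounded operator, so $G_s$ is norm-differentiable and $\partial_sG_s=\eta_{\gamma,a,\nu}G_s$ in $\Ar$. Alternatively, one can invoke \eqref{eq:generator-limit} and \eqref{eq:LK-generator}. The semigroup property gives $F_{s+h}=F_s\hconv{-1}\mu_h$, so $(F_{s+h}-F_s)/h\to L_{r,H}^{(-1)}[M]F_s$. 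Applying $\Phi_{-1}^H$, which is isometric, gives the same identity in $\Ar$.

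The only point needing care is passing from an $\Ar$-norm identity to the pointwise statement for $|z|\le r$. For each such $z$, evaluation $F\mapsto F(z)=\sum a_nz^n$ is a bounded linear functional on $\Ar$, since $|F(z)|\le\|F\|_r$. It is also multiplicative, so it commutes with the norm limit defining $\partial_s$ and with the product $\eta G_s$. Applying this evaluation to the norm identity $\partial_s G_s=\eta_{\gamma,a,\nu}G_s$ therefore yields \eqref{eq:forward} at every $|z|\le r$, and the partial derivative exists in the ordinary sense. This evaluation step is the closest thing to an obstacle, and it amounts to the trivial bound $|F(z)|\le\|F\|_r$ on the closed disc.
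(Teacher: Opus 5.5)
Your proposal is correct and matches the paper's proof. You write $\Phi_{-1}^H(F_s)=\Phi_{-1}^H(f)\exp(s\eta_{\gamma,a,\nu})$ in $\Ar$, using \eqref{eq:Phi-multiplicative} and \eqref{eq:LK-transform}, and then differentiate in the Banach-algebra norm. You also spell out two steps the paper leaves implicit: why the exponential is norm-differentiable, and why the norm identity gives the pointwise one for $|z|\le r$ (point evaluation satisfies $|F(z)|\le\|F\|_r$ and is multiplicative).
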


\begin{proof}
By \eqref{eq:LK-transform},
$\Phi_{-1}^H(F_s)=\Phi_{-1}^H(f)e^{s\eta_{\gamma,a,\nu}}$ in $\Ar$.  Differentiate in the Banach-algebra norm.
\end{proof}

\begin{example}[Brownian motion]\label{ex:Brownian}
For standard Brownian motion, $(\gamma,a,\nu)=(0,1,0)$ and
$\eta(z)=z^2/2$.  Hence
\[
 L_{r,H}^{(-1)}
 =(\Phi_{-1}^H)^{-1}\circ M_{z^2/2}\circ\Phi_{-1}^H,
\]
and \eqref{eq:forward} becomes
\[
 \frac{\partial}{\partial s}\Phi_{-1}^H(F_s)(z)
 =\frac{z^2}{2}\Phi_{-1}^H(F_s)(z).
\]
\end{example}



\bibliographystyle{amsplain}
\bibliography{hurwitz_series_deformed_convolution}

\end{document}